\numberwithin{equation}{section}
\theoremstyle{plain}
\newtheorem{theorem}{Theorem}[section]
\newtheorem{corollary}{Corollary}[section]
\newtheorem{lemma}{Lemma}[section]
\newtheorem{proposition}{Proposition}[section]
\theoremstyle{remark}
\newtheorem{remark}[theorem]{Remark}
\begin{document}

\title{Malmheden's Theorem Revisited}
\author{M. Agranovsky}
\address{Department of Mathematics, Bar--Ilan University, Ramat--Gan, 52900 ISRAEL}
\email{agranovs@math.biu.ac.il}

\author{D. Khavinson}
\address{Department of Mathematics \& Statistics, University of South Florida, Tampa, FL 33620-5700, USA}
\email{dkhavins@cas.usf.edu}

\author{H. S. Shapiro}
\address{Department of Mathematics, Royal Institute of Technology, Stockholm, SWEDEN 100 44}
\email{shapiro@telia.com,shapiro@math.kth.se}
\thanks{Some of this research was done as a part of  European Science Foundation Networking Programme HCAA.
The work of the first author was partially supported by the Israel
Science Foundation under the grant 688/08 The second author
gratefully acknowledges partial support from the National Science
Foundation grant DMS-0855597.}

\date{}


\maketitle

\begin{abstract}
In 1934 H. Malmheden \cite{M} discovered an elegant geometric
algorithm for solving the Dirichlet problem in a ball. Although his
result was rediscovered independently by Duffin \cite{D} 23 years
later, it still does not seem to be widely known. In this paper we
return to Malmheden's theorem, give an alternative proof of the
result that allows generalization to polyharmonic functions and,
also, discuss applications of  his theorem to geometric properties
of harmonic measures in balls in $\mathbb{R}^n$.
\end{abstract}

\section{Introduction}\label{sec1}

In 1934, H. Malmheden, a doctoral student of M. Riesz in Lund,
proved that the value of a harmonic function $u$ at any point $P$ in
a disk, or in a ball in $\mathbb{R}^n$, $n\ge 3$, can be computed
from its values on the boundary according to the following
algorithm. Take an arbitrary chord $L$ through $P$; calculate the
value at $P$ of the linear function $\ell$ on $L$ that interpolates
the values of $u$ at the endpoints of chord $L$; and, finally,
calculate the average of the values $\ell(P)$ over all chords $L$
through $P$ (cf. Theorem \ref{thm2.1} for the precise statement),
\cite{M}, also see \cite{B1,B2,B3,B4,B5}.

To fix the ideas, if we denote, say in $\mathbb{R}^2$, by
$A(\theta)$ the value at $P$ obtained by linearly interpolating the
boundary values of a harmonic function  $u$ in the disk at the
endpoints of the chord through $P$ making an angle $\theta$ with the
positive direction of the $x$-axis, Malmheden's theorem yields that
$u(P)=\left(1/2\pi\right) \int_{0}^{2\pi} A\left(\theta\right)\,
d\theta$. It follows then from Malmheden's theorem that
$u(P)\le\max\limits_\theta A(\theta)$. In \cite{B2,B3} J. Barta
attempted to extend the latter inequality to all convex, or even
star-shaped regions. In \cite{W}, Weinberger showed that no such
inequality, even allowing a multiplicative factor on the right, can
hold at all points of a convex region unless the region is a disk.
The analogous assertion is true in higher dimensions. Thus, the
converse to Malmheden's theorem also holds; i.e., the assertion of
his theorem is correct only if the domain is a ball. (Note, in
passing, that the complex-analytic version of Malmheden's theorem is
well-known under the name of Bochner - Martinelli formula. It
provides holomorphic extensions of analytic functions from the
boundaries of \textit{arbitrary} smoothly bounded domains in
$\mathbb{C}^n$ and is obtained by averaging  one-dimensional Cauchy
integral representations -- cf., e.g., \cite{GH}.)

With this paper we hope to return the beautiful result of Malmheden
to mathematical folklore since it appears that the result is not
widely known. Moreover, we present an alternative approach to
Malmheden's theorem that allows to generalize the result to the
polyharmonic functions, cf. \cite{B1,B4,B5}.We also present
applications of Malmheden's theorem yielding nice geometric
properties of the harmonic measure in balls in $\mathbb{R}^n$.

Regarding the history of the result, it must be mentioned that
Duffin \cite{D} has re-discovered Malmheden's theorem more than two
decades later. His proof, although obtained independently, is
essentially that of \cite{M}.

 The paper is organized as follows. In \S \ref{sec2} and \S
 \ref{sec3} we give two different  proofs of the Malmheden theorem and its
 generalization to $k-$dimensional cross-sections of the ball passing through a given point
 $P$. The first proof in \S \ref{sec2} is essentially Malmheden's original
 proof. The second proof seems to be new and is the crux for further generalizations of
Malmheden's procedure to polyharmonic functions.
 In \S \ref{sec4} we show the converse, i. e., that balls are the only domains in $\mathbb{R}^n$ for
 which Malmheden's theorem holds. In \S \ref{sec5} we discuss
 Malmheden's theorem in relation to some geometric properties of harmonic measure in
 the ball. In \S \ref{sec6}  we extend Malmheden's theorem
 to polyharmonic functions in $\mathbb{R}^n$. In $2$ dimensions this was done by Barta, cf. \cite{B1,B4,B5}; his proof is quite different from ours.
 A different converse theorem, assuming  that Malmheden's algorithm only reproduces harmonic functions at one point
 and, under additional condition of central symmetry, implying that the domain is a ball is given  in \S \ref{sec7}.
 We finish with some additional remarks in \S\ref{sec8}.

\section{A proof of Malmheden's theorem}\label{sec2}

Let us begin in a more general setting. Let $\Omega$ be a convex
open set in $\mathbb{R}^n$ with boundary $\Gamma$. Let $f$ be a
continuous function on $\Gamma$, $P\in\Omega$. Draw a chord $L$
through $P$ intersecting $\Gamma$ in exactly two points $Q_1$,
$Q_2$. Denote by $f_1$, $f_2$ the values of $f$ at $Q_1$, $Q_2$,
respectively. Interpolate the values $f_1$, $f_2$ by the unique
linear function on $L$ that we denote by $\ell$. Clearly, the value
$\ell(P)$ is a continuous function of $P$ and $L$. Holding $P$ fixed
let $u(P)$ denote the average of $\ell(P)$ over all lines $L$.
$u(P)$ is a continuous function in $\Omega$. Note, that if $f$ is a
restriction of a linear function onto $\Gamma$, then $f=u$ in
$\Omega$.

Clearly, $u(P)\to f\left(Q_0\right)$ when $P\to Q_0$,
$Q_0\in\Gamma$.

\begin{theorem}[Malmheden --- \cite{M,D}]\label{thm2.1}
If $\Omega$ is a ball in $\mathbb{R}^n$, then $u$ is harmonic in $\Omega$, and hence solves the Dirichlet problem (for the Laplacian) in $\Omega$
with data $f$.
\end{theorem}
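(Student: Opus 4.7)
The plan is to prove Theorem \ref{thm2.1} by showing that the averaging operator $f \mapsto u$ coincides, when $\Omega$ is a ball, with the classical Poisson integral. Once this identification is made, harmonicity of $u$ is immediate from that of the Poisson kernel, and the correct boundary behavior was already recorded in the preamble.

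Fix $B = B(0,R)$ and $P \in B$. I would parametrize chords by unit directions $\omega \in S^{n-1}$, writing $Q_\pm(\omega) = P + t_\pm(\omega)\omega$ with $t_- < 0 < t_+$ for the two endpoints. The linear interpolant reads
\[
\ell_\omega(P) = \frac{|PQ_-|}{|Q_+Q_-|}\,f(Q_+) + \frac{|PQ_+|}{|Q_+Q_-|}\,f(Q_-),
\]
and since $\omega \mapsto -\omega$ swaps $Q_+$ and $Q_-$ without altering $\ell_\omega(P)$, averaging folds the two terms into one:
\[
u(P) = \frac{2}{|S^{n-1}|}\int_{S^{n-1}} \frac{|PQ_-|}{|Q_+Q_-|}\,f(Q_+(\omega))\,d\sigma(\omega).
\]

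Next I would perform two geometric substitutions. First, the elementary \emph{power of the point} identity $|PQ_+|\cdot|PQ_-| = R^2 - |P|^2$ (constant over all chords through $P$) lets the barycentric weight be expressed using $|Q_+ - P|$ alone. Second, the change of variable $Q = Q_+(\omega) \in \partial B$ via $\omega = (Q-P)/|Q-P|$ is a central projection from $P$, for which the area element pulls back as
\[
d\sigma(\omega) = \frac{\langle Q-P,\, Q\rangle}{R\,|Q-P|^n}\, dS(Q),
\]
the outward normal to $\partial B$ at $Q$ being $Q/R$. The key algebraic identity
\[
(R^2 - |P|^2) + |Q-P|^2 = 2\langle Q,\, Q-P\rangle,
\]
forced by $|Q|^2 = R^2$, then produces an exact cancellation that collapses $u(P)$ to
\[
u(P) = \frac{1}{R\,|S^{n-1}|}\int_{\partial B} \frac{R^2-|P|^2}{|Q-P|^n}\,f(Q)\,dS(Q),
\]
which is the Poisson integral of $f$.

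The computation itself is elementary, so the only real obstacle is a conceptual one: spotting that the three ingredients above --- the barycentric weight, the Jacobian of central projection from $P$, and the numerator $R^2 - |P|^2$ of the Poisson kernel --- fit together \emph{only} because $|Q|$ is constant on $\partial B$. The same scheme carried out on a non-spherical convex domain would not yield a harmonic $u$, in agreement with the converse result of \S\ref{sec4}. Apart from this observation, the proof is a careful bookkeeping of three factors; no density argument, spherical-harmonic decomposition, or conformal trick is required.
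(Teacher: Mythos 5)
Your argument is correct and is in essence the paper's own first proof of Theorem \ref{thm2.1}: both convert the chord average into a boundary integral via the Jacobian of central projection from $P$ (your $d\sigma(\omega)=\langle Q-P,Q\rangle\,dS(Q)/\bigl(R\,|Q-P|^n\bigr)$ is the paper's $r_1\,d\theta=\cos\varphi\,ds$), and both then invoke the one special feature of the sphere's chord geometry --- your pair of identities $|PQ_+|\cdot|PQ_-|=R^2-|P|^2$ and $(R^2-|P|^2)+|Q-P|^2=2\langle Q,Q-P\rangle$ is exactly equivalent to the paper's $r_1+r_2=2R\cos\varphi$. The only real difference is the last step, and it is a mild improvement: where the paper splits the resulting integrand into a double-layer minus a single-layer potential and cites harmonicity of each, you carry the algebra one step further and identify the whole expression as the Poisson integral, which delivers harmonicity and the boundary values in one stroke.
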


\begin{proof}
In view of the preceding remarks it remains to show that $u$ is
harmonic. Let $L$, $P$, $Q_1$, $Q_2$ be as above,
$r_1=\left|PQ_1\right|$, $r_2=\left|PQ_2\right|$. Then
\begin{equation}
\label{eq2.1}
\ell(P)=\frac{r_1f_2+r_2f_1}{r_1+r_2}.
\end{equation}
First, assume $n=2$. Let $\theta$ be the (polar) angle that the
chord $L$ makes with the positive direction of the $x$-axis, so
$r_1(\theta +\pi)=r_2(\theta)$ and $f_1(\theta+\pi)=f_2(\theta)$.
Then, we have,
\begin{equation}
\label{eq2.2}
\begin{split}
u(P) &=\frac1{2\pi}\int_0^{2\pi}\ell(P)\,d\theta \\
&=\frac1\pi\int_0^{2\pi}\frac{r_2f_1}{r_1+r_2}\,d\theta.
\end{split}
\end{equation}

Let $\varphi$ be the angle between the (inner) normal $\vec{n}$ to
$\Gamma$ at $Q_1$ and the chord $L$. Then, comparing infinitesimal
arclengths on $\Gamma$ and the circle of radius $r_1$, centered at
$P$, we observe that $r_1d\theta=\cos\varphi\,ds$, where $ds$ is the
arclength on $\Gamma$. Thus, from \eqref{eq2.2} it follows that
\begin{equation}
\label{eq2.3}
u(P)=\frac1\pi\int_0^c\frac{\cos\varphi}{r_1}f_1\,ds
-\frac1\pi\int_0^c\frac{\cos\varphi}{r_1+r_2}f_1\,ds,
\end{equation}
where $c$ is the length of $\Gamma$.

The first term in \eqref{eq2.3} is harmonic since $\dfrac{\cos\varphi}{r_1}$ is nothing else but $\dfrac\partial{\partial n_{Q_1}}\log\left|P-Q_1\right|$,
 so the first integral is the ``double layer'' potential of $f_1$ (cf. \cite{K}). Finally, we use the hypothesis that $\Gamma$ is a circle of radius
 $R$. Then $r_1+r_2=2R\cos\varphi$, and the second integral is simply a
constant, hence $u$ is harmonic in $\Omega$.

In higher dimensions, the argument is almost identical. For the sake of clarity of notation, we take $n=3$. Then, \eqref{eq2.2} and \eqref{eq2.3} become
\begin{equation}
\label{eq2.2$'$}
u(P)=\frac1{2\pi}\int_\Gamma\frac{r_2f_1}{r_1+r_2}\,d\theta;
\tag{\ref{eq2.2}$'$}
\end{equation}
and, hence,
\begin{equation}
\label{eq2.3$'$}
u(P)=\frac1{2\pi}\int_\Gamma\frac{\cos\varphi}{r_1^2}\,f_1dS
-\frac1{2\pi}\int_\Gamma\frac{\cos\varphi}{r_1+r_2}\;\frac{f_1}{r_1}\,dS.
\tag{\ref{eq2.3}$'$}
\end{equation}
Here, $dS$ is the surface area measure on $\Gamma$ and $d\theta$ is
the solid angle measure at $P$. Once again, the first integral is
the double layer potential of $f_1$, while when $\Omega$ is a ball
of radius $R$, $r_1+r_2=2R\cos\varphi$, so the second integral
becomes a ``single-layer'' potential of $f_1$, harmonic in $\Omega$.

(Of course, \eqref{eq2.3$'$} and \eqref{eq2.3} correspond to very
well known representations of solutions of the Dirichlet problem in
the ball as sums of potentials of double and single layers in
dimensions $3$ and higher, and as (up to an additive constant) a
potential of a double layer in the disk (cf. \cite{K} or
\cite{KPS}).
\end{proof}

Let us sketch here a different proof of Theorem \ref{thm2.1} that
will allow us in \S\ref{sec6} to generalize Malmheden's algorithm to
biharmonic functions.

\begin{proof}[2nd Proof of Theorem \ref{thm2.1}]
It is more convenient to translate our coordinate system so the
origin is now at $P$ while
$\Omega=\left\{x\in\mathbb{R}^n:|x-\mathbf{c}|<1 \right\}$, where
$\mathbf{c}: \left|\mathbf{c}\right|<1$ is the center of the ball.
We wish to show that for any homogeneous harmonic polynomial $H$ in
$\partial\Omega$, Malmheden's procedure produces the number $H(0)$.
Clearly, this will imply the result since the restrictions of
harmonic polynomials to the sphere $\partial\Omega$ are dense in
$C(\partial\Omega)$, while linearity of Malmheden's algorithm yields
the result for all harmonic polynomials as long as it holds for
homogeneous polynomials. Finally, since Malmheden's theorem
obviously holds for linear polynomials we may assume that $m:=\deg
H>1$. and need to verify that the algorithm produces the number
$0=H(P)$. Fix $\mathbf{e}$ a unit vector in $\mathbb{R}^n$, and let
$L=L_{\mathbf{e}}$ be the line through the origin parallel to
$\mathbf{e}$. Let $t$ be the running coordinate on $L_{\mathbf{e}}$.
$L_{\mathbf{e}}$ meets $\partial\Omega$ at two points $Q_1$, $Q_2$
where $t$ is equal to the roots of the equation
\begin{equation}
\label{eq2.4} \left|t\mathbf{e}-\mathbf{c}\right|^2=1,\quad
t^2-2<\mathbf{c},
\mathbf{e}>t+\left(\left|\mathbf{c}\right|^2-1\right)=0.
\end{equation}
Along $L_{\mathbf{e}}$,
$H\left(t\mathbf{e}\right)=t^mH\left(\mathbf{e}\right)$. If we
denote by $a,b\in\mathbb{R}$ the values of $t$ corresponding to
$Q_1$, $Q_2$ and determined by \eqref{eq2.4}, then the linear
interpolant of $H$ along the segment of $L_{\mathbf{e}}$ between
$Q_1$, $Q_2$ equals
\begin{equation}
\label{eq2.5} \left\{\frac{t-b}{a-b}\,a^m
+\frac{t-a}{b-a}\,b^m\right\}H\left(\mathbf{e}\right),
\end{equation}
and is equal at $t=0$ to
\begin{equation}
\label{eq2.6}
(ab)\frac{\left[a^{m-1}-b^{m-1}\right]}{b-a}\,H\left(\mathbf{e}\right).
\end{equation}
Now, we note from \eqref{eq2.4} that $ab=|\mathbf{c}|^2-1$,
independent of the chosen vector $\mathbf{e}$. (Of course, this is a
well-known theorem from the euclidean geometry). So the cofactor of
$H\left(\mathbf{e}\right)$ in \eqref{eq2.6} is a symmetric
polynomial of degree $m-2$ of the roots $a,b$ of \eqref{eq2.4}.
Hence, by a standard result on symmetric functions, it is a
polynomial of degree at most $m-2$ of the coefficients of the
quadratic \eqref{eq2.4}. Now observe that the only coefficient
dependent on $\mathbf{e}$ in \eqref{eq2.4} is that of $t$ and it is
of degree $1$ in the coordinates $e_j$ of $\mathbf{e}$. Summarizing,
the cofactor of $H\left(\mathbf{e}\right)$ in \eqref{eq2.6} is a
polynomial in $e_j$ of degree at most $m-2$. Hence, the cofactor as
a function of $\mathbf{e}$ is orthogonal to the homogeneous harmonic
polynomial $H\left(\mathbf{e}\right)$ over the unit sphere, thus
Malmheden's procedure for $H$ indeed produces
$0=H\left(\mathbf{0}\right)$. The last assertion follows from the
fact that a polynomial of any degree $k$ (in our case, $k=m-2$)
matches on the sphere a harmonic polynomial of degree $\le k$,
which, of course, is the sum of homogeneous harmonic polynomials.
Homogeneous harmonic polynomials of different degrees are orthogonal
on the unit sphere, cf. e.g., \cite{F, K, KS}. The second proof is
now complete.
\end{proof}

\section{An extension of Malmheden's theorem to cross-sections of higher dimension}\label{sec3}

Theorem \ref{thm2.1} has an almost immediate generalization if one
replaces chords through a point $P$ by $k$-dimensional
cross-sections. (For the notational convenience we shall often
represent $P$ in the coordinate form as $x=(x_1,\dotsc, x_n)$). More
precisely, let $\Omega$ be a convex bounded domain in $\mathbb{R}^n$
with smooth boundary $\Gamma$. Let $P(x,y)$, $x\in\Omega$,
$y\in\Gamma$, denote the Poisson kernel for $\Omega$, so any
harmonic function in $\Omega$, continuous in $\overline{\Omega}$,
can be represented by its boundary values $f(y)$ on $\Gamma$ as
\begin{equation}
\label{eq3.1}
u(x)=\int f(y)\,P(x,y)\,dS(y).
\end{equation}
Let, as usual, the Green function in $\Omega$ be
\begin{equation}
\label{eq3.2}
g(x,y)=\begin{cases}
-\frac1{2\pi}\log|x-y|+u_x(y); & n=2;\\
\frac1{\omega_n}|x-y|^{2-n}+u_x(y); & n\ge 3,
\end{cases}
\end{equation}
where $u_x(y)$ is harmonic in $\Omega$, and
$g(x,y)\mid_{y\in\Gamma}\equiv 0$. $\omega_n$ denotes the area of
the unit sphere in $\mathbb{R}^n$. Then, as is well-known,
\begin{equation}
\label{eq3.3}
P(x,y)=\frac\partial{\partial n_y}\,g(x,y),
\end{equation}
when $n_y$ is the inner normal to $\Gamma$ at $y\in\Gamma$. For
$\Omega=\{|x|<1\}$,
\begin{equation}
\label{eq3.4} P(x,y)=\frac1{\omega_n}\;\frac{1-|x|^2}{|x-y|^n}
\end{equation}
(cf. \cite{K}, e.g., for more details).

For $n=1$ there is a formula playing the role of \eqref{eq3.1} and
\eqref{eq3.4}, where the ``domain'' is the interval $[-1 , 1]$. Then
\begin{equation*}
\frac12\left[f(1)\frac{1-x^2}{(1-x)}+f(-1)\frac{1-x^2}{(x+1)}\right]
=\frac12\left[f(1)(1+x)+f(-1)(1-x)\right]
\end{equation*}
is the linear function matching given data $f$ at $\pm 1$.

Fix $k$: $1\le k<n$ and consider a $k$-dimensional plane $\alpha$
through $P$. $\Omega_\alpha:=\Omega\cap\alpha$ is a convex domain in
$\mathbb{R}^k$. Let $P^\alpha$ denote the Poisson kernel for
$\Omega_\alpha$. Replacing in \eqref{eq3.1} $P(x,y)$ by
$P^{\alpha}$, $f$ by $f_\alpha=f\mid_{\alpha\cap\Gamma}$, and
$d\,S(y)$ by $dS_\alpha$, the Lebesgue measure on
$\alpha\cap\Gamma$, yields the solution of the Dirichlet problem
$P^{\alpha}(f_{\alpha})$ for $\Omega_\alpha$ with the data
$f_\alpha$ with respect to the $k$-dimensional Laplacian. Fix
$x\in\Omega$ and consider the Grassmann manifold $G(k,n)$ of all
$k$-dimensional planes $\alpha$ through $x$. The orthogonal group
$SO(n)$ acts naturally on $G(k,n)$ and the invariant Haar measure on
$SO(n)$ induces the unique normalized measure $dm_k(\alpha)$ on
$G(k,\alpha)$. $m_k(\alpha)$ is invariant with respect to all
rotations of $\mathbb{R}^n$ with the center $x$.

Now keeping the notations from \S \ref{sec2}, and in view of the
note following \eqref{eq3.4}, we can restate Theorem \ref{thm2.1}
and rewrite \eqref{eq3.1}. If $\Omega=\{|x|<1\}$ is the unit ball,
then any harmonic function $u$ in $\Omega$, continuous in
$\overline{\Omega}$ with $u\mid_\Gamma=f$ can be represented via
solutions of one-dimensional Dirichlet problems on $1$-dimensional
lines $L$ passing through $x\in\Omega$. Namely, denoting by
$f_L:=f\mid_{L\cap\Gamma}$, we have:
\begin{equation}
\label{eq3.5} u(x)=\int_{L\in G_{x}(1,n)}P^L \left(f_L\right)
dm_1(L).
\end{equation}
This yields an immediate corollary.

\begin{theorem}\label{thm3.1}
Let $\Omega=\{|x|<1\}$ be the unit ball, $f\in C(\Gamma)$,
$\Gamma=\partial\Omega$ is the unit sphere. The harmonic extension
$u$ of $f$ to $\Omega$ can be represented in terms of harmonic
extensions of $f$ into $k$-dimensional sections of $\Omega$:
\begin{equation}
\label{eq3.6}
u(x)=\int_{\alpha\in
G_x(k,n)}P^\alpha\left(f_\alpha\mid_{\Gamma\cap\alpha}\right)(x)\,dm_k(\alpha).
\end{equation}
\end{theorem}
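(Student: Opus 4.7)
The plan is to deduce Theorem \ref{thm3.1} from the one-dimensional Malmheden identity \eqref{eq3.5} by a Fubini-type argument that disintegrates the invariant measure $dm_1$ on $G_x(1,n)$ through the Grassmannian $G_x(k,n)$.

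First, I would observe that for every $\alpha \in G_x(k,n)$ the section $\Omega_\alpha := \Omega \cap \alpha$ is itself a Euclidean $k$-ball containing $x$ in its interior (of radius $\sqrt{1 - \mathrm{dist}(0,\alpha)^2}$; note that $x$ need not be its center). Since identity \eqref{eq3.5} holds at every interior point of any Euclidean ball in any dimension, applying it inside $\Omega_\alpha$ to $f_\alpha = f|_{\Gamma \cap \alpha}$ gives
\begin{equation*}
P^\alpha(f_\alpha)(x) = \int_{L \subset \alpha,\; L \ni x} P^L(f_L)(x)\, dm_1^\alpha(L),
\end{equation*}
where $dm_1^\alpha$ denotes the unique rotation-invariant probability measure on the set of lines through $x$ lying in $\alpha$.

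Next, I would integrate this equality against $dm_k(\alpha)$ over $G_x(k,n)$ and interchange integrations by Fubini's theorem:
\begin{equation*}
\int_{G_x(k,n)} P^\alpha(f_\alpha)(x)\, dm_k(\alpha) = \int_{G_x(k,n)} \int_{L \subset \alpha} P^L(f_L)(x)\, dm_1^\alpha(L)\, dm_k(\alpha).
\end{equation*}
The resulting iterated probability measure on $G_x(1,n)$ is invariant under the stabilizer of $x$ in $SO(n)$, since it is assembled from invariant measures on the appropriate homogeneous spaces. By uniqueness of the normalized invariant measure on the compact homogeneous space $G_x(1,n)$, this iterated measure coincides with $dm_1$, and hence the right-hand side above equals $\int_{G_x(1,n)} P^L(f_L)(x)\, dm_1(L) = u(x)$ by \eqref{eq3.5}.

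The only step that is not a direct reuse of \eqref{eq3.5} or of Fubini's theorem is the measure-theoretic disintegration through the fibration of pairs $\{(\alpha, L) : L \subset \alpha\}$, but this is a standard fact about Haar measures on compact homogeneous spaces and is the main (though routine) technical ingredient of the argument.
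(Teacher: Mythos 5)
Your argument is correct and is essentially the paper's own proof read in the opposite direction: both rest on applying the one-dimensional identity \eqref{eq3.5} once in the full ball and once in each $k$-dimensional section $\Omega_\alpha$ (which, as you note, is again a Euclidean ball), together with the Fubini-type disintegration of the invariant measure $dm_1$ on $G_x(1,n)$ through the fibration $\{(\alpha,L): x\in L\subset\alpha\}$. Your version is slightly more explicit about why the iterated measure coincides with $dm_1$ (uniqueness of the normalized invariant measure), a point the paper leaves implicit.
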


\begin{proof}
From \eqref{eq3.5} and Fubini's theorem we get
\begin{equation*}
\begin{split}
u(x) &=\int_{L\in G_x(1,n)}P^L\left(f_L\mid_{\Gamma\cap L}\right) dm_1(L) \\
&=\int_{\alpha\in G_x(k,n)}\int_{L\in\alpha,x\in L}P^L\left(f_L\mid_{\Gamma\cap L}\right) dm_1(L)\,dm_k(\alpha) \\
&=\int_{\alpha\in G_x(k,n)}P^\alpha\left(f_\alpha\mid_{\Gamma\cap\alpha}\right) dm_k(\alpha).\quad\qed
\end{split}
\end{equation*}
\end{proof}

\section{The converse}\label{sec4}

\begin{theorem}\label{thm4.1}
If $\Omega$ is a convex domain and the Malmheden procedure described in \S \ref{sec2}, produces for each continuous function $f$
the solution of the Dirichlet problem with data $f$, then $\Omega$ must be a ball.
\end{theorem}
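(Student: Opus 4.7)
The plan is to reduce Theorem~\ref{thm4.1} to the Weinberger inequality discussed in the introduction. By construction, the Malmheden value $u_{\text{Malm}}(P)$ is an average of the linear-interpolant values $A_f(L,P)$ as $L$ ranges over chords through $P$, so one trivially has
\begin{equation*}
u_{\text{Malm}}(P) \;\le\; \max_L A_f(L,P)
\end{equation*}
for every $f \in C(\Gamma)$ and every $P \in \Omega$. Under the hypothesis that Malmheden's procedure reproduces the Dirichlet solution $u$ for every continuous datum, substituting $u_{\text{Malm}}(P) = u(P)$ immediately yields Barta's inequality $u(P) \le \max_L A_f(L,P)$ at every interior point of $\Omega$ and for every $f \in C(\Gamma)$.

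Weinberger \cite{W} showed that, in the plane, such an inequality (indeed, even if one allows a multiplicative constant on the right) cannot hold throughout a convex domain unless that domain is a disk; the introduction records the analogous statement for $n \ge 3$. Applying this conclusion to our situation forces $\Omega$ to be a ball, which is precisely Theorem~\ref{thm4.1}.

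The principal obstacle in this route is the higher-dimensional version of Weinberger's theorem, since \cite{W} is written only for $n = 2$. The natural way to handle $n \ge 3$ is to reduce to the planar case by restricting to two-dimensional cross-sections through $P$: using the slice decomposition of Theorem~\ref{thm3.1} with $k = 2$, one expresses $u(P)$ as an average of planar Dirichlet solutions on the two-dimensional slices $\Omega \cap \alpha$, and then applies Weinberger slice by slice to conclude that each such slice is a disk, whence $\Omega$ is a ball. An alternative, more computational route bypasses \cite{W} entirely and compares the explicit Malmheden kernel derived in \eqref{eq2.3} pointwise with the Poisson kernel of $\Omega$; their coincidence for every boundary datum forces the identity $r_1 + r_2 = 2R\cos\varphi$ on every chord through every interior point, a classical "constant power of a point" characterization of balls in $\mathbb{R}^n$.
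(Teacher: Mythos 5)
Your two-dimensional argument is sound as far as it goes: the hypothesis does give $u(P)\le\max_L A_f(L,P)$ for every $P$ and every $f$, and Weinberger's theorem (which the paper records in Remark \ref{rem4.2} precisely as a \emph{stronger} converse) then forces $\Omega$ to be a disk. That is a legitimate route, genuinely different from the paper's, but it offloads the entire difficulty onto the external result \cite{W}, which is stated and proved only for $n=2$.

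The genuine gap is in dimensions $n\ge 3$. Your proposed repair via Theorem \ref{thm3.1} is circular: that theorem is proved only for the ball, and for a general convex $\Omega$ the Fubini decomposition yields
\begin{equation*}
u_{\mathrm{Malm}}(P)=\int_{\alpha\in G_P(2,n)}\Bigl(\text{two-dimensional Malmheden value of }f|_{\partial\Omega\cap\alpha}\text{ in }\Omega\cap\alpha\Bigr)\,dm_2(\alpha),
\end{equation*}
\emph{not} an average of planar Dirichlet solutions; the inner integral equals the planar Dirichlet solution in the slice only if the slice is already a disk, which is what you are trying to prove. Nor can a slice-wise Barta inequality be extracted from the hypothesis, since the planar harmonic extension of $f|_{\partial\Omega\cap\alpha}$ bears no relation to the restriction to $\alpha$ of the $n$-dimensional extension $u$. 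Your fallback ``computational route'' also stops short of a proof: identifying the Malmheden kernel of \eqref{eq2.3} with the Poisson kernel of $\Omega$ does not let you read off $r_1+r_2=2R\cos\varphi$, because the Poisson kernel of a general convex domain is not explicit. The missing idea is the one the paper uses: the first term of \eqref{eq2.3} (the double-layer potential) is harmonic for \emph{every} convex $\Omega$, so the hypothesis forces the second term to be harmonic for all data, hence its kernels $\cos\varphi/(r_1+r_2)$ and $\cos\varphi/\bigl(r_1(r_1+r_2)\bigr)$ are harmonic as functions of $P$. Placing the origin at $Q_1$ and writing $\Gamma$ near the antipodal point in polar form $\rho=\rho(t)$, the first kernel is a harmonic function of the angular variable alone, hence constant, and the second is homogeneous of degree $-1$ (resp.\ $2-n$), hence a constant multiple of $r_1^{-1}$ (resp.\ $r_1^{2-n}$); either way $\rho(t)=2R\langle t,n_{Q_1}\rangle$, the equation of a sphere. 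That argument is local, self-contained, and works in all dimensions, which is why the paper does not route the proof through \cite{W}.
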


For the sake of clarity, we restrict ourselves to the cases $n=2,3$.
The extension to higher dimensions is straightforward.

We shall use the notation from \S \ref{sec2}. The hypothesis implies
that the function $u(P)$ given by \eqref{eq2.3} or \eqref{eq2.3$'$}
is a harmonic function of $P$. Note that the first term, as we've
observed in \S \ref{sec2}, is ALWAYS harmonic in $\Omega$ since it
is equal to the double layer potential. Thus the hypothesis yields
that the second term is harmonic as a function of point $P\in\Omega$
as well. Therefore the functions
\begin{equation}
\label{eq4.1}
\text{(i)}\ \ \frac{\cos\varphi}{r_1+r_2}\qquad
\text{(ii)}\ \ \frac{\cos\varphi}{r_1\left(r_1+r_2\right)}
\end{equation}
are harmonic (as functions of $P$) in $\Omega$.

Put the origin at the point $Q_1$. Let $P=r_1t$, where $t:|t|=1$ is
a point on the unit sphere centered at $Q_1$.

Let us represent $\Gamma:=\partial\Omega$ near $Q_2$ by its ``polar'' equation: $\rho=\rho(t)$, $\rho=\left|Q_1Q_2\right|$. Then, \eqref{eq4.1} yields that functions
\begin{equation}
\label{eq4.2} \text{(i)}\ \ \frac{\langle
t,n_{Q_1}\rangle}{\rho(t)}\qquad \text{(ii)}\ \
\frac1r_1\;\frac{\langle t,n_{Q_1}\rangle}{\rho(t)},
\end{equation}
where $\langle ,\rangle$ denotes the scalar product and $n_{Q_1}$ is
the (inner) normal to $\Gamma$ at $Q_1$, are harmonic functions of
$r_1$ and $t$.

In (i), the function depends on the polar angle $t$ only, and being harmonic, forces it to be linear. Yet, it must be single-valued, thus is a constant. Denote it by $\dfrac1{2R}$. Thus, $\rho(t)=2R\cos\left(\langle\left(t,n_{Q_1}\right)\rangle\right)$ which is an equation of the circle.

For (ii), a simple calculation, or a quick check with \cite[p.\ 141,
Ex.\  4]{K} yields that $\dfrac{\text{const}}{r_{1}}$
\,($\dfrac{\text{const}}{r_{1}^{n-2}}$, $n\ge 3$, in general) are
the only homogeneous harmonic functions of degree $-1$ ($2-n$,
respectively). Hence, again, from \eqref{eq4.2}, (ii) we infer that
$\dfrac{\left\langle t,n_{Q_1}\right)}{\rho(t)}=$ const, i.e.,
$\Gamma$ is a sphere.\qed

\begin{remark}\label{rem4.2}
Weinberger \cite{W} actually showed a stronger converse. To fix the ideas, we shall describe his result for $n=2$.
\end{remark}

We keep the same notation as above. If Malmheden's procedure
produces the solution of the Dirichlet problem, then putting the
origin at point $P\in\Omega$, assuming $\Omega$ to be convex, and
the boundary $\Gamma$ to be given by
$\Gamma:=\{r=r(\theta),0\le\theta\le 2\pi\}$, we can  rewrite
\eqref{eq2.2} from \eqref{eq2.1} as
\begin{equation}
\label{eq4.3} u(P)=\dfrac1{2\pi}\int_0^{2\pi}
\frac{r(\theta)u\left(r(\theta+\pi),\theta+\pi\right)+r(\theta+\pi)u(r(\theta),\theta)}{r(\theta)+r(\theta+\pi)}\,d\theta.
\end{equation}
Equation \eqref{eq4.3} implies
\begin{equation}
\label{eq4.4} u(P)\le\max\limits_\theta\frac{r(\theta)u(r(\theta
+\pi),\theta+\pi)+r(\theta+\pi)u(r(\theta),\theta)}{r(\theta)+r(\theta+\pi)}.
\end{equation}
Weinberger showed that if a weaker version of \eqref{eq4.4} with a
constant factor on the right hand side holds for any $P\in\Omega$,
$\Omega$ being a convex domain, then $\Omega$ is a disk. This result
refuted the conjecture of Barta \cite{B3,B4} that \eqref{eq4.4}
should hold in general convex domains.

\begin{remark}\label{rem4.3}
Going over the proof of Theorem \ref{thm4.1}, one immediately notes that the hypothesis can be somewhat weakened:
\begin{enumerate}
\item[(i)]  Instead of convexity, it suffices to assume that the domain $\Omega$ is star-shaped with respect to all points $Q$
in a neighborhood of the fixed boundary point
$Q_1\in\partial\Omega$. Equivalently, that placing the origin at any
such $Q$, the (smooth, say $C^2$) boundary $\partial\Omega$ is given
by the ``polar'' equation $\rho=\rho(t)$, where $t:|t|=1$ runs over
the upper hemisphere of the unit sphere.

\item[(ii)]  Instead of assuming that Malmheden's algorithm successfully solves the Dirichlet problem for any data $f$,
it suffices to assume that merely for any continuous data supported
in a neighborhood of $Q_1$. We omit the details.
\end{enumerate}
\end{remark}

\section{Discussing Malmheden's Theorem and Harmonic Measure}\label{sec5}

We keep the same notation as in \S \ref{sec2}. If $\Omega$ is a
convex domain in $R^n$ and $P$ is an interior point, the
\textit{metric ratio} $R_P$ associated with the pair $(\Omega ,P)$
is the function defined on $\partial\Omega$ by: $R_P (Q_1)  =
\frac{|PQ_2|}{|Q_1Q_2|},$ where $Q_2$ denote the second point of
intersection with $\partial\Omega$  of the line $L$ joining $P$ to
$Q_1$. As in \S\ref{sec2}, we note that $R_P$ is a  continuous
function on $\partial\Omega$ with values strictly between 0 and 1.
It is constant (i.e., identically equal to $1/2$) if, and only if,
$P$ is a center of symmetry of $\Omega$ (i.e., all chords through
$P$ are bisected there). Let $w_P$ denote the harmonic measure on
$\partial\Omega$ evaluated at $P$ and $A_P$ denotes the subtended
angle measure on $\partial\Omega$ with respect to $P$. Malmheden's
theorem then can be reformulated as follows.

\begin{theorem}\label{thm5.1}
 If $\Omega$ is a ball, then for any $P\in\Omega$ , $R_P$ is the
Radon - Nikodym derivative of the harmonic measure $w_P$ with
respect to $A_P$.
\end{theorem}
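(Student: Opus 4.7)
The plan is to derive Theorem~\ref{thm5.1} as an essentially direct reformulation of Malmheden's theorem (Theorem~\ref{thm2.1}). Placing the origin at $P$ and, following \S \ref{sec2}, parameterising the chords through $P$ by unit vectors $\mathbf{e}\in S^{n-1}$, I denote the corresponding boundary points by $Q_1(\mathbf{e}), Q_2(\mathbf{e})$, the distances from $P$ by $r_i(\mathbf{e})=|PQ_i|$, and write $f_i(\mathbf{e})=f(Q_i(\mathbf{e}))$.

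First I would rewrite Malmheden's formula as the spherical average
\begin{equation*}
u(P)=\frac{1}{\omega_n}\int_{S^{n-1}} \frac{r_1 f_2+r_2 f_1}{r_1+r_2}\,d\sigma(\mathbf{e}),
\end{equation*}
and invoke the antipodal involution $\mathbf{e}\mapsto -\mathbf{e}$, under which $(r_1,Q_1,f_1)\leftrightarrow(r_2,Q_2,f_2)$, to see that the two terms in the numerator contribute equally. After the collapse and the recognition that $r_2/(r_1+r_2)=R_P(Q_1)$, this becomes
\begin{equation*}
u(P)=\frac{2}{\omega_n}\int_{S^{n-1}} R_P(Q_1(\mathbf{e}))\,f(Q_1(\mathbf{e}))\,d\sigma(\mathbf{e}).
\end{equation*}

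Next I would transport this integral from $S^{n-1}$ to $\partial\Omega$ via the radial projection $Q\mapsto(Q-P)/|Q-P|$. Convexity of the ball with $P$ interior makes this a diffeomorphism, and by the very definition of the subtended angle measure $A_P$, the surface measure $d\sigma$ on $S^{n-1}$ corresponds, up to normalisation, to $dA_P$ on $\partial\Omega$. Carrying out this change of variables gives
\begin{equation*}
u(P)=\int_{\partial\Omega} R_P(Q)\, f(Q)\, dA_P(Q).
\end{equation*}
Since this representation holds for every $f\in C(\partial\Omega)$ and since $u(P)=\int f\,dw_P$ characterises harmonic measure uniquely, I can then read off $dw_P=R_P\,dA_P$, which is the desired conclusion.

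The only delicate point I anticipate is bookkeeping of normalisation: Malmheden's identity naturally produces the constant $2/\omega_n$ in front of the integral, and one must either absorb it into the convention for $A_P$ (for instance, normalising so that $A_P(\partial\Omega)=2$) or record the Radon--Nikodym derivative as a positive scalar multiple of $R_P$. Apart from this, no real obstacle arises; the proof is essentially the antipodal-symmetric rearrangement of Malmheden's formula combined with the geometric meaning of the subtended angle measure.
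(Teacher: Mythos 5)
Your proof is correct and takes essentially the same route as the paper, whose own proof is the one-line observation that one applies Theorem~\ref{thm2.1} to characteristic functions of boundary sets; your version with general continuous $f$, the antipodal collapse, and the radial change of variables is just that computation made explicit (and the passage via $C(\partial\Omega)$ and Riesz representation neatly sidesteps the approximation of $\chi_E$ that the paper glosses over). The factor-of-$2$ normalisation issue you flag is genuine --- it is already visible in the word ``twice'' in Corollary~\ref{cor5.1} and in the sanity check at the centre of the ball, where $R_P\equiv 1/2$ while $w_P=A_P$ --- and is resolved exactly as you propose, by fixing the convention for $A_P$.
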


For the proof one only needs apply Theorem \ref{thm2.1} to the
characteristic functions of  arbitrary (say, relatively open)
subsets of $\partial\Omega$.

  Thus, Malmheden's theorem can be seen as a procedure to  (when $\Omega$ is a ball ) compute harmonic measure from ``purely geometric''
  quantities, lengths and (solid) angles, by integration. Note that the corresponding derivative
  when $A_P$ is replaced by surface measure on $\partial\Omega$ is given by
$C(P) / |PQ_1|^d$, the Poisson kernel \eqref{eq3.4} evaluated at $P$
and $Q_1$. For the unit ball centered at the origin $O$,
$C(P)=\frac{1}{\omega_n}(1-|OP|^2)$, where $\omega_n$ is the surface
area of the unit sphere -- cf. \cite{F, K} . Modulo the known
relation between measures $A_w$ and the surface area measure
Malmheden's theorem is thus equivalent to Poisson's formula for the
solution to Dirichlet's problem for the ball.

Against this background it is interesting to note the following
further connections between  harmonic and subtended angle measures
in the ball, mediated by Malmheden's theorem. It implies other
elegant properties of harmonic measure which, in dimensions $>2$,
are rarely noted.

\begin{corollary}\label{cor5.1}
Let $P$ be a point in the unit ball $\Omega :=\{x :
\left|x\right|<1\}$, and let a double cone $K_P$ with vertex at $P$
cut out ``spherical caps'' $U$ and $V$ from the unit sphere
$\mathcal{S}$. Let $w_P(E)$ denote the harmonic measure of a set
$E\subset S$ evaluated at $P$. Then, $w_P(U)+w_P(V)$ equals twice
the (normalized) solid angle at the vertex $P$ of $K$. (The
normalizing factor equals $\frac{1}{\omega_n}$, where $\omega_n$
denotes the surface area of the unit sphere in $\mathbb{R}^n$.)
\end{corollary}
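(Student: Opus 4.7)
The plan is to apply Malmheden's theorem, in the form of Theorem~\ref{thm5.1}, to the boundary datum $f := \chi_U + \chi_V$. Since the harmonic extension of $f$ evaluated at $P$ is precisely $w_P(U) + w_P(V)$, it suffices to compute this extension via Malmheden's averaging procedure. By Theorem~\ref{thm2.1}, that value equals the mean over all chords $L$ through $P$ of the linear interpolant $\ell_L(P) = (r_1 f_2 + r_2 f_1)/(r_1 + r_2)$ of the boundary values at the two endpoints of $L$.

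The key geometric observation is that the double cone $K_P$ cleanly separates the chords through $P$ into two classes, up to a measure-zero set of chords tangent to $\partial K_P$. If $L$ lies inside $K_P$, then one endpoint of $L$ belongs to $U$ and the other to the antipodal cap $V$, so $f_1 = f_2 = 1$ and $\ell_L(P) \equiv 1$. If $L$ lies outside $K_P$, then both endpoints belong to $\partial\Omega \setminus (U\cup V)$, so $f_1 = f_2 = 0$ and $\ell_L(P) \equiv 0$. Thus the Malmheden average reduces to the proportion, with respect to the normalized direction measure at $P$, of chords contained in $K_P$.

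Parametrizing chords by unit vectors $\mathbf{e} \in \mathcal{S}$ with the uniform measure $d\sigma/\omega_n$ (each chord being identified with the pair $\{\mathbf{e},-\mathbf{e}\}$ and $K_P$ being self-antipodal), that proportion equals $\sigma(K_P \cap \mathcal{S})/\omega_n = (A_P(U) + A_P(V))/\omega_n$. The two nappes of $K_P$ are reflections of one another through $P$, so $A_P(U) = A_P(V)$; writing $S(K_P)$ for their common value, namely the natural ``apex'' solid angle of the double cone, we conclude
\begin{equation*}
w_P(U) + w_P(V) \;=\; \frac{A_P(U) + A_P(V)}{\omega_n} \;=\; \frac{2\,S(K_P)}{\omega_n},
\end{equation*}
which is twice the normalized solid angle at the vertex $P$ of $K_P$.

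No serious obstacle arises: the whole argument is just Malmheden's formula applied to a well-chosen indicator function, and the arithmetic $f_1 = f_2 \in \{0,1\}$ makes $\ell_L(P)$ locally constant on each class of chords, so the Poisson integral collapses to a pure solid-angle integral. The only point deserving a line of care is the convention for the solid angle of a double cone; the factor of~$2$ in the statement records exactly the contributions of its two opposite nappes.
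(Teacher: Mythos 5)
Your argument is correct and is essentially the paper's own proof: the key point in both is that $\chi_{U\cup V}$ is invariant under the chord involution $J_P$ (since $K_P$ has vertex at $P$), so $f_1=f_2$ on every chord, the interpolant $\ell_L(P)$ is constant on each chord, and Malmheden's average collapses to the subtended angle measure $A_P(U\cup V)$. The only detail the paper adds, which you should not skip entirely, is that Theorem~\ref{thm2.1} is stated for continuous data, so one first proves $\int f\,dw_P=\int f\,dA_P$ for continuous self-involutory $f$ and then passes to $\chi_{U\cup V}$ as a bounded pointwise limit of such functions via the bounded convergence theorem.
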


Keeping the same notation as in Corollary \ref{cor5.1}, we also have
the following.

\begin{corollary}\label{cor5.2}
If we consider a system of masses consisting of caps $U$, $V$, each
endowed with the harmonic measure $w_P(U)$, $w_P(V)$, respectively,
then the center of mass is at $P$.
\end{corollary}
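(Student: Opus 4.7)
The plan is to apply Malmheden's theorem (Theorem \ref{thm2.1}) coordinate-wise to the vector-valued boundary datum $f(Q):=Q\cdot\chi_{U\cup V}(Q)$. The harmonic extension of $f$ evaluated at $P$ equals $\int_{U\cup V} Q\,dw_P(Q)$, so that the conclusion ``center of mass at $P$'' is equivalent to the identity
\begin{equation*}
\int_{U\cup V} Q\,dw_P(Q)\;=\;\bigl(w_P(U)+w_P(V)\bigr)\,P.
\end{equation*}
The only minor technicality is that $f$ is discontinuous; this is handled by a routine approximation argument in $L^\infty(\partial\Omega)$, or alternatively by appealing to Theorem \ref{thm5.1}, which directly supplies the Malmheden identity for bounded Borel data.

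Write Malmheden's formula for the harmonic extension $u$ of $f$ as
\begin{equation*}
u(P)\;=\;\frac{1}{\omega_n}\int_{S^{n-1}}\frac{r_1\,f(Q_2(\theta))+r_2\,f(Q_1(\theta))}{r_1+r_2}\,d\sigma(\theta),
\end{equation*}
where $Q_1(\theta),Q_2(\theta)$ are the two endpoints of the chord through $P$ in direction $\theta$. The key observation is the geometric lockstep enforced by $K_P$ being a \emph{double} cone: the set $\Theta\subset S^{n-1}$ of directions pointing into $K_P$ is antipodally symmetric through the origin, so for $\theta\in\Theta$ both endpoints $Q_1,Q_2$ lie in $U\cup V$, while for $\theta\notin\Theta$ both lie outside. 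In the latter case the integrand vanishes; in the former, $f(Q_j)=Q_j$ and, because $P$ partitions the segment $Q_1Q_2$ in ratio $r_1:r_2$, the linear interpolant of the identity function along the chord equals $P$ itself:
\begin{equation*}
\frac{r_1\,Q_2+r_2\,Q_1}{r_1+r_2}\;=\;P.
\end{equation*}

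Consequently the Malmheden integral reduces to $(|\Theta|/\omega_n)\,P$, where $|\Theta|$ denotes the $(n-1)$-dimensional measure of the antipodally symmetric direction set $\Theta$. Repeating the same computation with the scalar datum $\chi_{U\cup V}$ in place of $f$ (which is in effect the content of Corollary \ref{cor5.1}) yields $w_P(U)+w_P(V)=|\Theta|/\omega_n$, and combining the two gives $u(P)=\bigl(w_P(U)+w_P(V)\bigr)P$, as required. I anticipate no substantive obstacle: once one notes that the central symmetry of $K_P$ through $P$ forces ``both endpoints in or both endpoints out of $U\cup V$'' in lockstep, the identification of the Malmheden average with $P\cdot w_P(U\cup V)$ is immediate.
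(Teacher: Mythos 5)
Your argument is correct and is essentially the paper's own proof: the paper likewise applies Malmheden's algorithm to the product of a linear function with a $J_P$-invariant function (the ``lockstep'' of chord endpoints for the double cone), uses that linear interpolation along a chord reproduces $\ell(P)$ times the common endpoint value, invokes Theorem \ref{thm2.1} together with Corollary \ref{cor5.1}, and then specializes $\ell$ to the coordinate functions and $f$ to $\chi_{U\cup V}$. The only cosmetic difference is that you carry out the computation directly for the vector-valued datum $Q\,\chi_{U\cup V}(Q)$ rather than first establishing the general identity $\int_{\mathcal S}\ell f\,dw_P=\ell(P)\int_{\mathcal S}f\,dw_P$ for all self-involutory $f$.
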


\begin{remark}\label{rem5.1}
Corollary \ref{cor5.1} is well known in $2$ dimensions --- cf.
\cite[Ch. IV, \S2]{N} and can be used to give an even shorter proof
of Malmheden's theorem. Corollary \ref{cor5.2}, even in the two
dimensional case, seems not to have been noticed before.
\end{remark}

\begin{proof}[Proof of Corollary \ref{cor5.1}]
Fix $P\in\Omega$. Let $J=J_P$ be the involution of $\mathcal{S}$
that maps a point $x$ of the unit sphere $\mathcal{S}$ onto the
second point where the line $Px$ meets $\mathcal{S}$. Let $f\in
C(\mathcal{S})$ be self-involutory with respect to $J$, i.e.
$f(x)=f(Jx)$, let us denote by $dA_P$ the normalized solid angle
measure at the vertex $P$. Then, Theorem \ref{thm2.1} implies that
\begin{equation}
\label{eq5.1} \int_{\mathcal S}f(x)\,dw_P(x) =\int_{\mathcal
S}f(x)\,dA_P(x).
\end{equation}

Thus, \eqref{eq5.1} holds for all self-involutory $f\in
C(\mathcal{S})$. But the characteristic function $\chi_{U\cup V}$ of
the union $U\cup V$ is obviously a bounded pointwise limit of the
self-involutory (w.r.t.\ $J$) functions in $C(\mathcal{S})$. Hence,
by the bounded convergence theorem \eqref{eq5.1} also holds for
$\chi_{U\cup V}$. This is the conclusion of Corollary \ref{cor5.1}.
\end{proof}

\begin{proof}[Proof of Corollary \ref{cor5.2}]
If $\ell(x)$ is a linear polynomial and $f\in
C(\mathcal{S}):f(x)=f(Jx)$ is self-involutory with respect to $J_P$,
then the Malmheden algorithm applied to $(\ell f)(x)$ (of course,
w.r.t.\  $P$) produces
\begin{equation}
\label{eq5.2} \ell(P)\int_{\mathcal{S}}f(x)\,dA_P(x).
\end{equation}
(This is seen at once since Malmheden's algorithm preserves linear
functions while $f(x)=f\left(J_Px\right)$).

By Theorem \ref{thm2.1} we have then
\begin{equation}
\label{eq5.3} \ell(P)\int_S
f(x)\,dA_p(x)=\ell(P)\int_{\mathcal{S}}f(x)\,dw_P(x).
\end{equation}
So, from \eqref{eq5.2}, we therefore infer that
$$
\int_{\mathcal{S}}\ell(x)\cdot f(x)\,dw_P(x)
=\ell(P)\int_{\mathcal{S}}f(x)\,dw_P(x).
$$
Hence, for all linear functions $\ell$, we have
\begin{equation}
\label{eq5.4} \ell(P)=\frac{\int\limits_S\left(\ell(x)\cdot
f(x)\right)\,dw_P(x)}{\int\limits_{\mathcal{S}}f(x)\,dw_P(x)}.
\end{equation}
Substituting for $\ell$ the coordinate functions $x_j, j=1,...,n$,
we obtain from \eqref{eq5.4} at once that the center of mass of the
mass density $f(x)\,dw_P(x)$ is at $P$. Taking for $f$ the
characteristic function $\chi_{U\cup V}$ of the union $U\cup V$
proves the corollary.
\end{proof}

\begin{remark}\label{rem5.2}

\begin{enumerate}
\item[(i)]  Corollary \ref{cor5.1} has an independent proof in $2$ dimensions. If $P$, with a slight abuse of notation, denotes a complex number
in $\mathbb{D}$, $\mathbb{D}=\{|z|<1\}$, it is straightforward to
calculate the involution $J_{P}(z)$ to be
$J_{P}(z)=\dfrac{P-z}{1-\bar{P}z}$.
 Consider an arc $U$ on the unit circle $\mathbb{T}$. Its harmonic measure at $P$ equals the harmonic measure of $V=J_{P}(U)$ evaluated at $0$
 (by conformal invariance of the harmonic measure, since $J_{P}(P)=0$). The harmonic measure of an arc $V$ evaluated at the origin equals the central
 angle associated with $V$ normalized by the factor $1/2\pi$. Thus, the harmonic measure $w_P(U)$ at $P$ equals
    \begin{equation}
    \label{eq5.5}
    w_P(U)=\frac{|V|}{2\pi},
    \end{equation}
    where $|V|$ denotes the length of the arc $V=J_{P}(U)$. Hence, recalling that $J_{P}(V)=U$, we conclude that
    \begin{equation}
    \label{eq5.6}
    \begin{gathered}
    w_P(U)+w_P(V)=\frac1{2\pi}\left(|U|+|V|\right) \\
    =\frac1\pi\left(A_P:=\text{ angle subtended by $U$ w.r.t. }P\right) \\
    =\frac1\pi\left(A_P:=\text{ angle subtended by $V$ w.r.t. }P\right).
    \end{gathered}
    \end{equation}
    (The last two equalities are, of course, corollaries of Euclid's theorem about   angles formed by the two chords through a point in the disk which states
     that their radian measure equals to the average of the radian measures of the subtending arcs of the circle.) This gives a different proof of Corollary \ref{cor5.1} in $2$
dimensions.

\item[(ii)]  From (i), one can easily deduce Theorem \ref{thm2.1} for $n=2$. Indeed, Corollary \ref{cor5.1} yields that for every $w\in\mathbb{D}$
with, as before, $A_w$ denoting the (normalized) subtended angle
measure as seen from $w$, and for every analytic polynomial $P(z)$,
we have
    \begin{equation}
    \label{eq5.7}
    \int_\mathbb{T} P(\xi)\,dA_w(\xi)
    =\frac12\left(P(0)+P(w)\right).
    \end{equation}
    (Recall that $J_w(z)=\dfrac{w-z}{1-\bar{w}z}$, $J_w(0)=w$, $J_w(w)=0$.) Obviously, \eqref{eq5.7} then holds for all functions $f$ analytic
    in $\mathbb{D}$ and continuous in $\overline{\mathbb{D}}$. Let $f$ be any such function. Parameterize the lines through $w$ by their intersection
    points $z$ with the unit circle $\mathbb{T}$. It is easy then by solving a system of linear equations to find for the line $\ell:=\ell(z)$
    the linear interpolant for the values $f(z)$, $f\left(J_w(z)\right)$. At $w$ it equals
    \begin{equation}
    \label{eq5.8}
    w\,\frac{f(z)-f\left(J_w(z)\right)}{z-J_w(z)}
    +\,\frac{z f\left(J_w(z)\right)-f(z)J_w(z)}{z-J_w(z)}.
    \end{equation}

    Clearly for a fixed $w\in\mathbb{D}$ and $f$ analytic, say, in a neighborhood of $\overline{\mathbb{D}}$ \eqref{eq5.8} produces an analytic function $F(z)$.
    Remembering that $J_w$ maps $0$ to $w$ and $w$ to $0$, one easily calculates that $F(0)=F(w)=f(w)$. Hence, applying Malmheden's procedure to $f(z)$,
    $z\in\mathbb{T}$ and then using $F(z)$ from \eqref{eq5.8} and \eqref{eq5.7} we obtain the value $f(w)$. Thus, Malmheden's algorithm applied to, say,
    any polynomial in $z$, produces the ``correct'' value at $w$.

Separating real and imaginary part yields Malmheden's theorem in the
disk, i.e., for $n=2$.
\end{enumerate}
\end{remark}

The following proposition can be viewed as a converse of Corollaries \ref{cor5.1} and \ref{cor5.2} combined.

\begin{proposition}\label{prop5.1}
Let $\Omega$ be a bounded convex domain in $\mathbb{R}^n$ and
$P\in\Omega$. Assume that the harmonic measure
$w_P=w(\cdot,\partial\Omega,P)$ on $\Gamma:=\partial\Omega$ at $P$
satisfies the conclusions of Corollaries \ref{cor5.1} and
\ref{cor5.2}. Then, $P$ is a ``Malmheden'' point for $\Omega$, i.e.,
Malmheden's algorithm (cf.\ \S\S 1 and 2) applied to any continuous
function $f\in C(\Gamma)$ on $\Gamma$ produces the value at $P$ of
the solution to the Dirichlet problem in $\Omega$ with data $f$ on
$\Gamma$.
\end{proposition}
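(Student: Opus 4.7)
The plan is to pin down the harmonic measure $w_P$ entirely from the two hypotheses and then match it against the measure $2R_P\,dA$ that encodes the Malmheden average, where $A$ is the normalized solid angle measure at $P$ on $\Gamma$ and $R_P$ is the metric ratio. Let $J_P:\Gamma\to\Gamma$ be the chord-flip involution at $P$, and for $\psi\in C(\Gamma)$ write $\psi=\psi_++\psi_-$ with $\psi_\pm=(\psi\pm\psi\circ J_P)/2$ (self- and anti-involutory parts). Let $r_\pm(x)$ be the distances from $P$ to the two endpoints of the chord through $x$, let $e(x)=(x-P)/r_+(x)$, and $R_P(x)=r_-(x)/(r_+(x)+r_-(x))$. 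Standard approximation of any self-involutory continuous function by step functions of the form $\sum c_k\chi_{U_k\cup J_P(U_k)}$ lets us rephrase Corollaries~\ref{cor5.1} and~\ref{cor5.2} as: for every self-involutory $\phi\in C(\Gamma)$ and every coordinate $j$,
\begin{equation*}
\text{(a)}\ \int_\Gamma \phi\,dw_P=\int_\Gamma \phi\,dA,\qquad
\text{(b)}\ \int_\Gamma (x_j-P_j)\,\phi(x)\,dw_P(x)=0.
\end{equation*}

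Next, a change of variables $e\mapsto Q_+(e)$ in the definition of the Malmheden average, identifying $e\in S^{n-1}$ with the forward chord endpoint $Q_+(e)\in\Gamma$, shows (in any convex domain)
\begin{equation*}
M(f)=2\int_\Gamma R_P(x)\,f(x)\,dA(x),
\end{equation*}
so the proposition reduces to the identity $dw_P=2R_P\,dA$. Decompose $w_P=w_P^++w_P^-$ into self- and anti-involutory parts as a signed measure. Because $A$ is $J_P$-invariant and sym measures are determined by their pairings with sym test functions, hypothesis (a) immediately gives $w_P^+=A$, i.e.\ $w_P=A+w_P^-$.

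The core step is identifying $w_P^-$ from (b). First split the linear function $x_j-P_j$ with respect to $J_P$: a direct calculation shows its self-involutory part equals $\tfrac{r_+-r_-}{2}e_j$ and its anti-involutory part equals $\tfrac{r_++r_-}{2}e_j$. Substituting $w_P=A+w_P^-$ into (b) and using the fact that sym functions annihilate antisym measures (and vice versa) collapses (b) to
\begin{equation*}
\int_\Gamma \tfrac{r_++r_-}{2}\,e_j\,\phi\,dw_P^-
=-\int_\Gamma \tfrac{r_+-r_-}{2}\,e_j\,\phi\,dA
\end{equation*}
for every self-involutory $\phi$. Absorbing the self-involutory factor $2/(r_++r_-)$ into $\phi$ and then using the identity $\sum_j e_j^2\equiv 1$ to write every anti-involutory $\chi\in C(\Gamma)$ as $\chi=\sum_j(\chi e_j)e_j$ with each $\chi e_j$ self-involutory, we sum the resulting $n$ identities to obtain $\int\chi\,dw_P^-=\int(2R_P-1)\chi\,dA$ for every anti-involutory $\chi$. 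Since $(2R_P-1)dA$ is itself anti-involutory, this forces $w_P^-=(2R_P-1)dA$; adding this to $w_P^+=A$ yields $w_P=2R_P\,dA$, and combining with the formula for $M(f)$ gives $M(f)=\int f\,dw_P$ for every $f\in C(\Gamma)$.

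The main obstacle is the anti-involutory step: promoting the scalar family of identities (b) to a pointwise identification of $w_P^-$ requires the algebraic observation that $(x_j-P_j)$ splits into sym and antisym pieces that are both scalar multiples of $e_j$, together with the isotropy identity $\sum e_j^2\equiv 1$ which enables the $n$ coordinate versions of (b) to span all anti-involutory test functions. The preliminary reduction of Corollaries~\ref{cor5.1} and~\ref{cor5.2} to (a) and (b), and the change of variables giving $M(f)=2\int R_Pf\,dA$, are routine but each deserves a line of justification.
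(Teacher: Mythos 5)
Your proof is correct, but it takes a genuinely different route from the paper's. The paper argues at the level of the algebra of test functions: writing $\ell^{\#}=\ell\circ J_P$, it notes that the set $R$ of data for which the algorithm succeeds contains $I(\Gamma)$ and $\ell\cdot I(\Gamma)$ by hypothesis, then uses the identity $\ell^2g=\ell\,(\ell+\ell^{\#})g-(\ell\ell^{\#})g$ and induction to get $\ell^m\cdot I(\Gamma)\subset R$ for every $m$, and concludes by density since powers of linear polynomials span all polynomials and $R$ is closed in $C(\Gamma)$. You instead identify the harmonic measure outright: hypothesis (a) pins down the $J_P$-symmetric part of $w_P$ (it equals $A_P$), and hypothesis (b) — via the splitting of $x_j-P_j$ into the self-involutory piece $\tfrac{r_+-r_-}{2}e_j$ plus the anti-involutory piece $\tfrac{r_++r_-}{2}e_j$, together with the isotropy identity $\sum_j e_j^2\equiv 1$ that lets the $n$ coordinate identities span all anti-involutory test functions — pins down the antisymmetric part as $(2R_P-1)\,dA_P$, giving $dw_P=2R_P\,dA_P$, which is exactly the density produced by the Malmheden average. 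Your route is somewhat more computational but buys more: it recovers the metric-ratio formula of Theorem~\ref{thm5.1} at the single point $P$ as a consequence of the two corollaries alone, and it makes transparent why precisely two hypotheses are needed, one for each parity component of $w_P$. The steps you flag as routine (extending Corollaries~\ref{cor5.1} and~\ref{cor5.2} from caps to all self-involutory continuous functions, and the change of variables giving $M(f)=2\int_\Gamma R_P f\,dA_P$) are indeed routine and are used implicitly by the paper as well; when writing this up, do record that $r_++r_-$ is bounded below on $\Gamma$ (so $2/(r_++r_-)$ is a legitimate continuous self-involutory multiplier) and that $e\mapsto Q_+(e)$ is a homeomorphism of the unit sphere onto $\Gamma$ because $P$ is interior to the convex domain.
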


\begin{proof}
Let $I(\Gamma)\subset C(\Gamma)$ consist of all self-involutory
functions, i.e., such that $f\in C(\Gamma):f(J(x))=f(x)$,
$x\in\Gamma$. (Here, $J=J_P$ is as above, the involution of $\Gamma$
induced by $P$.) Denote by $R\subset C(\Gamma)$ the subset of $f\in
C(\Gamma)$ for which the Malmheden algorithm produces the desired
value at $P$ of the solution of the Dirichlet problem with data $f$.
Our hypothesis implies that
$R\supset\{I(\Gamma)\}\cup\left\{(\ell\cdot f),f\in
I(\Gamma),\ell\text{ is a linear function of
}x_1,\dotsc,x_n\right\}$. We need to show that $R$ contains all
polynomials, i.e., is dense in $C(\Gamma)$. Since $R$ is obviously
closed the proposition will then follow.

Fix a linear function $\ell(x)$. Denote  $\ell^{\#}=\ell\circ J$. We
have
$\ell^2=\ell\cdot\left(\ell+\ell^{\#}\right)-\ell\cdot\ell^{\#}$
and, since $\ell+\ell^{\#}$, $\ell\cdot\ell^{\#}\in I(\Gamma)$, our
hypothesis implies that for any $g\in I(\Omega)$, $\ell^2\cdot
g=\ell\cdot\left(\ell+\ell^{\#}\right)g-\left(\ell\cdot\ell^{\#}\right)g\in\{I(\Gamma)\}\cup\{(\ell\cdot
f),f\in I\}\subset R$. So, $\left\{\ell^2\cdot
I(\Gamma)\right\}\subset R$. An induction argument shows that
$\left\{\ell^m\cdot I(\Gamma)\right\}\subset R$ for any integer $m$.
Since the linear span of the set of powers of linear polynomials
contains all polynomials the proposition follows.
\end{proof}
\begin{remark}\label{rem5.3}
It seems a worthy question whether the two geometric properties of
harmonic measure enunciated in Corollaries \ref{cor5.1} and
\ref{cor5.2} (i.e., the two hypotheses in Proposition \ref{prop5.1})
are actually independent of each other. We suspect they are but
haven't been able to prove it.
\end{remark}

\section{Malmheden's theorem for polyharmonic functions}\label{sec6}

Let $\Omega$ be the unit ball in $\mathbb{R}^n$.
$\Gamma:=\partial\Omega$ is the unit sphere, and let $f:=f(x)$ be a
smooth function, say even real-analytic, in a neighborhood of
$\Gamma$. Then as is well-known (cf. \cite{F}) the Dirichlet problem
for the bi-harmonic operator
\begin{equation}
\label{eq6.1}
\begin{cases}
\Delta^2u=0; \\
u=f, & \nabla u=\nabla f\text{ on }\Gamma
\end{cases}
\end{equation}
has a unique solution in $\Omega$. Not going into technicalities,
the reader may argue as follows.

As is well-known --- cf. \cite{ACL}, any function $u:\Delta^2u=0$ in
a domain $\Omega$ admits so-called Almansi expansion:
\begin{equation}
\label{eq6.2} u=h_1+|x|^2h_2,
\end{equation}
with $h_1$ and $h_2$ harmonic functions in $\Omega$, uniquely
defined by $u$. One can trivially adjust \eqref{eq6.2} when
$\Omega=\{|x|<1\}$ is a ball and rewrite \eqref{eq6.2} as
\begin{equation}
\label{eq6.3} u=H_1+\left(|x|^2-1\right)H_2,
\end{equation}
$H_1,H_2$ being harmonic in $\Omega$. Then, to solve the BVP
\eqref{eq6.1}, we need to solve two consecutive Dirichlet problems:
first, for $H_1$ ($=f$ on $\Gamma$), and then for $H_2$:
$\frac\partial{\partial r}(f-H_1)=2H_2$, where
$\frac\partial{\partial r}$ stands for  the radial derivative.
Uniqueness follows by application of Green's formula making use of
the bi-harmonic Green's function --- cf., e.g., \cite{HK}.

Now to formulate the analogue of Malmheden's theorem for biharmonic
functions (i. e., those  for which $\Delta^2=0$) we proceed as
follows. The one-dimensional analog of the bi-Laplacian is the
operator $\left(\frac d{dx}\right)^4$, whose kernel consists of
cubic polynomials. We can extend Malmheden's procedure for a fixed
point $P\in \Omega$ to solutions of \eqref{eq6.1}. Again draw a
chord $L$ through $P$ intersecting $\Gamma=\partial \Omega$ at
points $Q_1,Q_2$. Let $t$ denote the real parameter along $L$, $a$
and $b$ being the values of $t$ at $Q_1,Q_2$. Let $C(t)$ denote the
(unique) cubic polynomial such that the functionals $C(a)$, $C'(a)$,
$C(b)$, $C'(b)$ interpolate the corresponding values of the data
$f_L(t):=f\mid_L$. (For derivatives of $f_L$ at $a$ and $b$ we take
the values of the directional derivatives of the data $f$ along $L$
at those points.) $C_L(P)$ is then the value of $C$ at $P$. Holding
$P\in \Omega$ fixed, let $u(P)$ denote the average of $C_L(P)$ over
all lines $L$. $u(P)$ is, of course, a continuous function in $B$.
Also, if $f(x)$ is a cubic polynomial, then $u=f$ in $\Omega$.
Clearly, $u(P)\to f\left(Q_o\right)$, $\nabla u(P)\to\nabla
f\left(Q_o\right)$ when $P\to Q_o$, $Q_o\in\Gamma$.

\begin{theorem}[\cite{B1,B5}]\label{thm6.1}
If $\Omega$ is a ball in $\mathbb{R}^n$, $u$ is biharmonic in
$\Omega$, and hence solves \eqref{eq6.1}.
\end{theorem}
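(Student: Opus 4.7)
The plan is to adapt the second proof of Theorem~\ref{thm2.1}, replacing linear interpolation by cubic Hermite interpolation and combining it with the Almansi decomposition of biharmonic functions. After translating coordinates so that $P=0$ and $\Omega=\{x:|x-\mathbf{c}|<1\}$ with $|\mathbf{c}|<1$, any biharmonic $u$ smooth up to $\Gamma$ admits the unique representation $u=H_{1}+(|x-\mathbf{c}|^{2}-1)H_{2}$ with $H_{1},H_{2}$ harmonic in $\overline{\Omega}$, so $u(0)=H_{1}(0)+(|\mathbf{c}|^{2}-1)H_{2}(0)$. Since the cubic Malmheden average depends linearly and continuously on the pair $(f,\nabla f)$, and since biharmonic polynomials (by Almansi exactly the sums $H_{1}+(|x-\mathbf{c}|^{2}-1)H_{2}$ with $H_{1},H_{2}$ harmonic polynomials) are dense in this class, I would reduce the verification to two test cases: (A) $u=H$ for a homogeneous harmonic polynomial $H$ of degree $m$, and (B) $u=(|x-\mathbf{c}|^{2}-1)H$ for the same.

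For a chord $L_{\mathbf{e}}=\mathbb{R}\mathbf{e}$ through the origin, let $a,b$ denote the roots of equation~\eqref{eq2.4}; as observed in the second proof of Theorem~\ref{thm2.1}, $ab=|\mathbf{c}|^{2}-1$ is independent of $\mathbf{e}$ while $a+b=2\langle\mathbf{c},\mathbf{e}\rangle$ is linear in $\mathbf{e}$. In Case~(A), along $L_{\mathbf{e}}$ one has $u(t\mathbf{e})=t^{m}H(\mathbf{e})$ and $\partial_{\mathbf{e}}u(t\mathbf{e})=mt^{m-1}H(\mathbf{e})$; cubic Hermite interpolation is exact for $m\le 3$, so the interpolant vanishes at $t=0$ when $m=1,2,3$ and reproduces $H(0)$ when $m=0$. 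For $m\ge 4$ the standard Hermite error identity
\begin{equation*}
t^{m}-C(t)=(t-a)^{2}(t-b)^{2}q(t),\qquad \deg q=m-4,
\end{equation*}
yields $C(0)=-(ab)^{2}q(0)H(\mathbf{e})$, and $q(0)$ is a symmetric polynomial in $a,b$ of degree $m-4$. By the fundamental theorem of symmetric polynomials together with the fact that $ab$ is constant while $a+b$ is linear in $\mathbf{e}$, the cofactor of $H(\mathbf{e})$ is a polynomial in $\mathbf{e}$ of degree at most $m-4$, whose restriction to the unit sphere decomposes into spherical harmonics of degrees $\le m-4$, all orthogonal to $H$. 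Hence the average over $\mathbf{e}$ vanishes, matching $H(0)=0$.

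In Case~(B) the boundary data reduce to $f\equiv 0$ and $\nabla f=2(x-\mathbf{c})H$ on $\Gamma$. Using the identity $2(a-\langle\mathbf{c},\mathbf{e}\rangle)=a-b$, the Hermite data along $L_{\mathbf{e}}$ become $C(a)=C(b)=0$, $C'(a)=(a-b)a^{m}H(\mathbf{e})$, $C'(b)=(b-a)b^{m}H(\mathbf{e})$. Writing $C(t)=(t-a)(t-b)L(t)$ with $L$ linear forces $L(a)=a^{m}H(\mathbf{e})$, $L(b)=b^{m}H(\mathbf{e})$, so
\begin{equation*}
C(0)=ab\cdot L(0)=-(ab)^{2}\,\frac{a^{m-1}-b^{m-1}}{a-b}\,H(\mathbf{e}).
\end{equation*}
The cofactor of $H(\mathbf{e})$ is again $(ab)^{2}$ times a symmetric polynomial in $a,b$ of degree $m-2$, and the same orthogonality argument kills its average against $H$ on the sphere for $m\ge 2$ (consistent with $u(0)=(|\mathbf{c}|^{2}-1)H(0)=0$). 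For $m=0$ the cofactor equals $ab=|\mathbf{c}|^{2}-1$, producing $u(0)=(|\mathbf{c}|^{2}-1)H$ as required, and for $m=1$ both sides vanish.

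The step I expect to be the most delicate is not the computation itself but the density/continuity reduction: one must verify that biharmonic polynomials are dense among biharmonic functions carrying enough regularity for $(f,\nabla f)$ to make sense on $\Gamma$, and that both sides of the proposed identity are continuous in the corresponding topology. This amounts to a Walsh-type harmonic-polynomial approximation applied to the two Almansi components $H_{1},H_{2}$, together with a routine check that the Almansi splitting inherits $C^{1}$-smoothness up to $\Gamma$ from $u$. Apart from that, the argument is driven by exactly the same symmetric-polynomial/degree-counting device that powered the second proof of Theorem~\ref{thm2.1}, with the Hermite error factor $(t-a)^{2}(t-b)^{2}$ providing precisely the extra degree drop needed in the biharmonic setting.
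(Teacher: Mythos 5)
Your argument is correct, and its engine is the same as the paper's: reduce to homogeneous pieces, show the cofactor of $H(\mathbf{e})$ is $(ab)^2$ times a symmetric polynomial of low degree in $a,b$, convert that (via $ab=\mathrm{const}$, $a+b$ linear in $\mathbf{e}$) into a polynomial of degree $\le m-2$ in $\mathbf{e}$, and kill the average by orthogonality of spherical harmonics. You diverge from the paper in two organizational respects, both defensible. First, the paper does not split via Almansi at the outset: it reduces directly to a homogeneous \emph{biharmonic} polynomial $H$ of degree $m$, whose restriction to $L_{\mathbf{e}}$ is still $t^mH(\mathbf{e})$, applies the interpolation lemma (Lemma~\ref{lem6.2}) once, and only invokes Almansi at the very end to see that $H|_{\Gamma}=h_1+h_2$ is a sum of spherical harmonics of degrees $m$ and $m-2$, both exceeding $m-4$. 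Your route instead treats $H_1$ and $(|x-\mathbf{c}|^2-1)H_2$ separately; this costs you the extra explicit computation in Case~(B) (which you carry out correctly — the factorization $C(t)=(t-a)(t-b)L(t)$ reducing to the linear interpolant of \eqref{eq2.6} is exactly right, as is the bookkeeping for $m=0,1$), but it buys a more transparent accounting of why the degree drop is $m-4$ in one case and $m-2$ in the other. Second, your proof of the interpolation lemma via the Hermite remainder identity $t^m-C(t)=(t-a)^2(t-b)^2q(t)$ — reading off $C(0)=-(ab)^2q(0)$ and the symmetry of $q(0)$ from the division algorithm — is cleaner and more conceptual than the paper's argument, which solves the $4\times4$ linear system by Cramer's rule and then argues that the poles of the resulting rational functions must cancel; it also generalizes immediately to $(d/dt)^{2k}$ and hence to $\Delta^k$. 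Your closing caveat about the density of biharmonic polynomials in the $C^1$ topology is at exactly the level of rigor the paper itself adopts (it asserts the same density without proof), so nothing is missing relative to the published argument.
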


\begin{proof}
It is convenient to translate the coordinate system so $\Omega$ is a
unit ball centered at some point $\mathbf{c}$, while $P$ is now  the
origin. By a standard approximation argument we may assume that the
data $f(x)$ is a polynomial since the latter are dense in the space
$C^1(\Gamma)$ of continuously differentiable functions on $\Gamma$.
Moreover, we may also assume that $f(x)$ is a biharmonic polynomial
since the latter are also dense in the space of smooth biharmonic
functions in the  $C^1$-metric. Lastly, by linearity of the
Malmheden operator, we may assume that $f(x)$ is in fact a
homogeneous biharmonic polynomial of degree $m>3$.

Let us separate the following one-dimensional interpolation lemma.

\begin{lemma}\label{lem6.2}
Let $a<0<b$, $m>3$ and $C(t)=C_m(t)$ be the (unique) cubic
polynomial interpolating the values of $t^m$ and its first
derivative at $a$ and $b$. Then, $C_m(0)$ is a homogeneous symmetric
polynomial of degree $m$. Moreover,
\begin{equation}
\label{eq6.4} C_m(0)=(ab)^2q_{m-4}(a,b),
\end{equation}
where $q_{m-4}$ is a homogeneous polynomial of $a$ and $b$ of degree
$m-4$.
\end{lemma}

Assuming the lemma and in view of our chain of reductions we need
only show that for $f(x)=:H(x)$, $H(x)$ being a homogeneous
biharmonic polynomial of degree $m\ge 4$, the Malmheden algorithm
produces the number $0=H(P)$, (Point $P$, recall, is at the origin).
Let $L$, $a$, $b$ be as described in the paragraph preceding Theorem
\ref{thm6.1}. Note that $ab=$ constant that depends on $P$ only as
already noted in \S \ref{sec2} (cf. the argument below
\eqref{eq2.6}).
From this, \eqref{eq6.4} and since $H$ is a homogeneous polynomial
it follows, by repeating the argument following \eqref{eq2.6}, that
for each line $L$ through $P=\mathbf{0}$ defined by the directional
vector $\mathbf{e}\in\Gamma$, Malmheden's procedure produces
\begin{equation}
\label{eq6.5} \operatorname{const}q_{m-4}\left(\mathbf{e}\right)
H\left(\mathbf{e}\right)
\end{equation}
where the constant only depends on point $P$.

Averaging \eqref{eq6.5} over all chords $C$, i.e. over all
directional unit vectors $\mathbf{e}\in\Gamma$ produces $0$ as
required. To verify this last assertion we simply note, since $H$ is
homogeneous, \eqref{eq6.2} implies that
\begin{equation}
\label{eq6.6} H(x)=h_1(x)+|x|^2h_2(x),
\end{equation}
where $h_1,h_2$ are homogeneous harmonic polynomials of degrees $m$
and $m-2$ respectively. $q_{m-4}$, (as is true for any polynomial
--- cf. e.g., \cite{KS}), can be matched on the unit sphere $\Gamma$
by a harmonic polynomial $h_q$, $\deg h_q\le m-4<m-2$. From the
well-known orthogonality of spherical harmonics (see, e.g.,
\cite{KS}, \cite{ACL}, \cite{F},\cite{K}), we obtain our last
assertion and, hence, Theorem \ref{thm6.1} follows modulo Lemma
\ref{lem6.2}.
\end{proof}

\begin{proof}[Proof of Lemma \ref{lem6.2}]
Let $C_m(t):=C(t)=At^3+Bt^2+Ct+D$ match the function $t^m$ together
with its first derivatives at $a$ and $b$, $a<0<b$.

Writing down four equations corresponding to these $4$ interpolating
conditions we see at once (by Cramer's Rule) that all the
coefficients are \emph{rational} functions of $a,b$ of degree at
most $m+6$ symmetric with respect to $a$ and $b$ (interchanging $a$
and $b$ merely permutes equations in the system).

Furthermore, solving this linear system of equations via Cramer's
Rule we also observe that each coefficient $A,\dotsc,D$ is a
rational function of $a$ and $b$ for which the degree of the
numerator is at most $m+6$, while the degree of the denominator
which is the determinant of the system is precisely $6$. Now these
rational functions actually cannot have any finite poles for some
complex values of $a$ and $b$ since Hermite interpolation polynomial
$C_m(t)$ exists and is unique for all complex values $a,b$. Thus,
all the zeros of denominators in the rational expressions for the
coefficients $A,\dotsc,D$ of $C_m(t)$ must cancel out. Hence, all
the coefficients $A,\dotsc,D$ of $C_m(t)$ are actually polynomials
of degree $(m+6)-6=m$. Finally, since when $a$ or $b=0$, $D=D(a,b)$
must have a double zero at the origin, it follows that
$D(a,b)=(ab)^2q_{m-4}(a,b)$, where $q_{m-4}$ is a symmetric
polynomial in $a,b$ of degree $\le m-4$. The lemma is proved and the
proof of Theorem \ref{thm6.1} is now complete.
\end{proof}
\vspace{.5in}

\begin{remark}\hfill
\begin{enumerate}
\item[(i)]  Obviously, the idea of the above argument originated from our second proof of Malmheden's original
result --- Theorem \ref{thm2.1} --- given in \S\ref{sec2}.

\item[(ii)]  Lemma \ref{lem6.2} has a natural extension to higher order differential operators $\left(\dfrac d{dt}\right)^{2k}$, $k\ge 3$.
Accordingly, with rather obvious modifications Theorem \ref{thm6.1}
extends to polyharmonic operators $\Delta^k$ --- cf. \cite{B1,B5}.

\item[(iii)]  Theorem \ref{thm3.1} also readily extends to polyharmonic operators.

\item[(iv)]  We do not know whether the converse to Theorem \ref{thm6.1}, similar to Theorem \ref{thm4.1}, also holds for polyharmonic functions.
\end{enumerate}
\end{remark}


\section{Another converse to Malmheden's theorem}\label{sec7}

Let, as before, $\Omega$ be a convex, bounded domain in
$\mathbb{R}^n$, $\Gamma=\partial\Omega$, $P\in\Omega$. If
Malmheden's procedure with respect to $P$ as described in \S2
applied to any, say, harmonic polynomial $h$ yields the value
$h(P)$, it does not seem to imply that $\Omega$ is a ball. The
problem, of course, is that it does not allow us to locate the
center of the ball. However if one assumes that not only Malmheden's
procedure applied to harmonic functions $u$ in $\Omega$  yields the
correct value $u(P)$, but also $\Omega$ is centrally symmetric with
respect to $P$, one can conclude that $\Omega$ is a ball centered at
$P$.

The driving force for this result is the following extremely simple
but useful observation regarding harmonic measures.

\begin{lemma}\label{lem7.1}
Let $\Omega_1\not\equiv\Omega_2$ be two smoothly bounded,
star-shaped domains in $\mathbb{R}^n$ and assume that
$\Omega_1\cap\Omega_2\ne\emptyset$ and
$\partial\Omega_1\cap\partial\Omega_2\ne\emptyset$. Let
$O\in\Omega_1\cap\Omega_2$ be a point in $\Omega_1\cap\Omega_2$. Let
$w_1,w_2$ be harmonic measures on $\partial\Omega_1$,
$\partial\Omega_2$, respectively, evaluated at $O$. Let
$E_1=\partial\Omega_1\setminus\Omega_2$ be the portion of
$\partial\Omega_1$ that lies outside $\Omega_2$, while
$F_2=\partial\Omega_2\cap\Omega_1$ is the portion of
$\partial\Omega_2$ that lies inside $\Omega_1$. Similarly, define
$E_2=\partial\Omega_2\setminus\Omega_1$, and
$F_1=\partial\Omega_1\cap\Omega_2$.

Then,
\begin{equation}
\label{eq7.1} w_1\left(E_1\right)<w_2\left(F_2\right),
\end{equation}
and, similarly,
\begin{equation}
\label{eq7.2} w_2\left(E_2\right)<w_1\left(F_1\right).
\end{equation}

(Both inequalities are strict.)
\end{lemma}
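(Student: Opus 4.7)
My plan is to prove \eqref{eq7.1}; the second inequality \eqref{eq7.2} follows by interchanging the roles of $\Omega_1$ and $\Omega_2$. I would introduce an intermediate quantity: the harmonic measure $\omega$ at $O$ of the ``lens'' $D:=\Omega_1\cap\Omega_2$. Its boundary decomposes as $\partial D = F_1\cup F_2\cup\Sigma$ where $\Sigma:=\partial\Omega_1\cap\partial\Omega_2$ is the ``crossing seam''. Under the smoothness hypothesis on $\partial\Omega_i$, $\Sigma$ has Hausdorff dimension at most $n-2$ and hence $\omega(\Sigma)=0$, since harmonic measure on a piecewise smooth boundary is absolutely continuous with respect to surface measure.

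Let $u$ be the harmonic extension of $\chi_{E_1}$ into $\Omega_1$, so that $u(O)=w_1(E_1)$, and let $v$ be the harmonic extension of $\chi_{F_2}$ into $\Omega_2$, so that $v(O)=w_2(F_2)$. Both $u$ and $v$ are harmonic in $D$, and off $\Sigma$ their boundary values on $\partial D$ satisfy
\[
u|_{F_1}\equiv 0,\qquad u|_{F_2}<1,\qquad v|_{F_1}>0,\qquad v|_{F_2}\equiv 1,
\]
the two strict inequalities being instances of the strong maximum principle: $u$ does not equal $1$ identically in $\Omega_1$ because it vanishes on the nonempty piece $F_1\subset\partial\Omega_1$, forcing $u<1$ throughout $\Omega_1$ and in particular on the interior set $F_2\subset\Omega_1$; symmetrically $v>0$ on $F_1\subset\Omega_2$. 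The Poisson representation on $D$ then gives
\[
u(O)=\int_{F_2}u\,d\omega<\omega(F_2),\qquad v(O)=\omega(F_2)+\int_{F_1}v\,d\omega>\omega(F_2),
\]
and chaining these yields $w_1(E_1)=u(O)<\omega(F_2)<v(O)=w_2(F_2)$, which is \eqref{eq7.1}.

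The main technical obstacle is ensuring that the crossing set $\Sigma$ is $\omega$-negligible, so that the Poisson integrals over $\partial D$ reduce cleanly to integrals over $F_1$ and $F_2$; under the stated smoothness assumption this is routine, but one could also replace $\Omega_1$ by a slightly smaller dilate to make the intersection transverse, apply the argument verbatim, and pass to the limit via continuity of harmonic measure in the domain. As a conceptual sanity check, one may recast the whole argument probabilistically: a Brownian path from $O$ that first exits $\Omega_1$ through $E_1\subset\Omega_1\setminus\Omega_2$ must previously have first-exited $\Omega_2$ through a point of $F_2$, giving the inclusion of events $\{X_{\tau_1}\in E_1\}\subset\{X_{\tau_2}\in F_2\}$; the inclusion is strict because there is positive probability of exiting $\Omega_2$ through $F_2$ and subsequently exiting $\Omega_1$ through $F_1$ rather than $E_1$.
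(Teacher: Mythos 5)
Your proposal is correct and is essentially the paper's own argument: both compare the harmonic extensions of $\chi_{E_1}$ and $\chi_{F_2}$ on the lens $\Omega_1\cap\Omega_2$, using the strong maximum principle in each $\Omega_i$ to get the strict boundary inequalities and the (generalized) maximum principle on the lens, while discarding the seam $\partial\Omega_1\cap\partial\Omega_2$ as a harmonic-measure null set. Your insertion of the intermediate quantity $\omega(F_2)$ via the Poisson representation on the lens is only a cosmetic repackaging of the paper's direct comparison $w_2(F_2;x)>w_1(E_1;x)$ in $U$.
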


\begin{proof}
We just prove \eqref{eq7.1}; the proof of \eqref{eq7.2} is
identical.

Let $U=\Omega_1\cap\Omega_2$, $\partial U=F_1\cup F_2$. On $\partial
U\setminus\left(F_1\cap F_2\right)$ the (harmonic) functions
$w_1(E_1;x)$, $w_2\left(F_2;x\right)$ satisfy (by the maximum
principle) the following:
\begin{equation}
\label{eq7.3}
\begin{gathered}
w_1(E_1;x)\mid_{F_1}=0;\quad w_1\left(E_1;x\right)\mid_{F_2}<1 \\
w_2\left(F_2;x\right)\mid_{F_1}>0;\quad
w_2\left(F_2;x\right)\mid_{F_2}=1.
\end{gathered}
\end{equation}
Thus,
\begin{equation}
\label{eq7.4} w_2\left(F_2;x\right)>w_1\left(E_1;x\right)
\end{equation}
on $\partial U\setminus\left(F_1\cap F_2\right)$. Since $F_1\cap
F_2$ has measure zero on $\partial U$, and hence its harmonic
measure is zero as well, the generalized maximum principle --- cf.,
e.g., \cite{F, K, N} --- for bounded harmonic functions yields that
\eqref{eq7.4} holds everywhere in $U$. This proves \eqref{eq7.1}.
\end{proof}

\begin{theorem}\label{thm7.2}
Let $\Omega$ be star-shaped domain in $\mathbb{R}^n$ and
$O\in\Omega$ be a point in $\Omega$. Assume also  that $\Omega$ is
centrally symmetric w.r.t. $O$. If the Malmheden algorithm applied
to $O$ reproduces the values at $O$ of all functions harmonic in
$\Omega$ and continuous in $\overline{\Omega}$, then $\Omega$ is a
ball centered at $O$.
\end{theorem}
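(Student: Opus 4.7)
The plan is to translate the Malmheden reproducing property at $O$ into an identity of measures on $\Gamma := \partial\Omega$, and then play this against Lemma \ref{lem7.1} with a comparison ball centered at $O$.

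First I would exploit central symmetry to reduce the hypothesis to a clean equality. Place the origin at $O$ so that the (star-shape) radial function $r: S^{n-1} \to (0, \infty)$ satisfies $r(-\mathbf{e}) = r(\mathbf{e})$, and every chord $L_{\mathbf{e}}$ through $O$ meets $\Gamma$ at the antipodal points $\pm r(\mathbf{e})\mathbf{e}$. Equation \eqref{eq2.1} then collapses to
\[
\ell_{L_{\mathbf{e}}}(O) \;=\; \tfrac{1}{2}\bigl[f(r(\mathbf{e})\mathbf{e}) + f(-r(\mathbf{e})\mathbf{e})\bigr].
\]
Averaging over $G_O(1,n) \cong S^{n-1}/\{\pm 1\}$ with its normalized rotation-invariant measure and using $r(-\mathbf{e}) = r(\mathbf{e})$ to symmetrize, one obtains
\[
M[f](O) \;=\; \frac{1}{\omega_n}\int_{S^{n-1}} f(r(\mathbf{e})\mathbf{e})\,d\sigma(\mathbf{e}) \;=\; \int_\Gamma f\,dA_O,
\]
where $A_O$ is the (normalized) solid angle measure on $\Gamma$ as seen from $O$. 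Since the hypothesis asserts that this equals $u(O) = \int_\Gamma f\,dw_O$ for every $f$ arising as the boundary trace of a harmonic $u \in C(\overline\Omega)$ --- and these traces exhaust $C(\Gamma)$ by solvability of the Dirichlet problem on a smoothly bounded star-shaped $\Omega$ --- the Riesz representation theorem gives the key identity $w_O = A_O$ on $\Gamma$.

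Next I would argue by contradiction. Suppose $\Omega$ is not a ball centered at $O$; then $r$ is non-constant, so $r_{\min} := \min r < r_{\max} := \max r$. Pick any $R \in (r_{\min}, r_{\max})$ and set $\Omega_1 = \Omega$, $\Omega_2 = B(O, R)$. Continuity of $r$ produces a direction $\mathbf{e}_0$ with $r(\mathbf{e}_0) = R$, so $\partial\Omega_1 \cap \partial\Omega_2 \ne \emptyset$ and $O \in \Omega_1 \cap \Omega_2$; all hypotheses of Lemma \ref{lem7.1} are satisfied, yielding the strict inequality $w_1(E_1) < w_2(F_2)$. Now on the ball $B(O,R)$ the harmonic measure at the center is uniform and coincides with the solid angle measure $A_O^B$; combined with $w_1 = A_O^\Omega$ from the first step, this gives $w_1(E_1) = A_O^\Omega(E_1)$ and $w_2(F_2) = A_O^B(F_2)$. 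But $E_1 \subset \partial\Omega$ and $F_2 \subset \partial B(O,R)$ are, by construction, the images under the respective radial maps $\mathbf{e} \mapsto r(\mathbf{e})\mathbf{e}$ and $\mathbf{e} \mapsto R\mathbf{e}$ of the \emph{same} set of directions $\{\mathbf{e} \in S^{n-1}: r(\mathbf{e}) > R\}$, so both solid angles equal $|\{\mathbf{e}: r(\mathbf{e}) > R\}|/\omega_n$. This forces $w_1(E_1) = w_2(F_2)$, contradicting the strict inequality. Hence $r$ is constant and $\Omega = B(O, r_0)$.

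The main delicate point in the plan is the first reduction: one must carefully identify the normalized rotation-invariant measure on $G_O(1,n)$ with $\frac{1}{\omega_n}d\sigma$ for even integrands on $S^{n-1}$, and invoke enough regularity of $\Gamma$ both for solvability of the Dirichlet problem (so the identity extends from harmonic $u$'s to all $f \in C(\Gamma)$) and for the genuine boundary crossing of $\partial\Omega$ and $\partial B(O,R)$ demanded by Lemma \ref{lem7.1}. Once the identity $w_O = A_O$ is in hand, everything that follows is a direct application of that lemma.
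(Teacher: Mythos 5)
Your proof is correct and follows essentially the same route as the paper's: both arguments first use central symmetry to upgrade the Malmheden hypothesis at $O$ to the identity $w_O=A_O$ between harmonic measure and normalized solid-angle measure, and then contradict the strict inequality of Lemma \ref{lem7.1} by comparing $\Omega$ with a ball centered at $O$ whose boundary crosses $\partial\Omega$. The only (immaterial) difference is your choice of a comparison ball of radius $R\in(r_{\min},r_{\max})$, where the paper takes the ball of equal volume.
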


\begin{proof}
First, observe that Lebesgue dominated convergence theorem
immediately extends the hypothesis to all bounded harmonic functions
whose boundary values are pointwise limits of continuous functions
on $\partial\Omega$. Thus, in particular, the conclusion applies to
harmonic measures of smoothly bounded open subsets of
$\partial\Omega$ (or, to subarcs in $2$ dimensions). Let $B$ be the
ball centered at $O$ with same volume as $\Omega$. If $B=\Omega$
there is nothing to prove. Then, $B\ne\Omega$ and since
$\operatorname{Vol}(B)=\operatorname{Vol}(\Omega)$, $\partial
B\cap\partial\Omega\ne\emptyset$. Since the Malmheden algorithm
applies to $\Omega$ at $O$, and $\Omega$ is centrally symmetric with
respect to $O$, the proof of Corollary \ref{cor5.1} implies that the
harmonic measure on $\partial\Omega$ evaluated at $O$ is identical
with the normalized solid angle measure subtended from $O$. Thus,
applying Lemma \ref{lem7.1} to the configuration $\Omega_1=\Omega$,
$\Omega_2=B$ and $O\in\Omega\cap B$, we arrive at the contradiction,
since both harmonic measures on $\partial\Omega$ and $\partial B$ at
$O$ equal to the solid angle measure subtended from $O$ and hence,
must be the same for the sets $E_1=\partial\Omega\setminus B$ and
$F_2=\partial B\cap\Omega$ and, respectively, for $E_2=\partial
B\setminus\Omega$ and $F_1=\partial\Omega\cap B$. This contradicts
\eqref{eq7.1}--\eqref{eq7.2}.

Therefore, $\Omega$ must equal $B$ and the theorem is proved.
\end{proof}

\section{Concluding remarks}\label{sec8}

\begin{enumerate}
\item[(i)]  Theorems \ref{thm2.1} and \ref{thm3.1} admit a nice
probabilistic interpretation, e. g., in $\mathbb{R}^3$. Informally,
it reduces to the following. Consider three ``Brownian travelers''
departing from a point $P$ in the unit ball $B$ in $\mathbb{R}^3$.
The first moves according to the laws of standard Brownian motion,
the second chooses at random a plane through $P$ and follows the
Brownian motion in that plane; the third chooses at random a line
through $P$ and follows the Brownian motion on that line. All of
these travelers will reach the unit sphere $\mathcal{S}:=\partial B$
with probability $1$. Malmheden's theorem asserts that for any
portion $E\subset \mathcal{S}$, the probability that the first
contact with $\mathcal{S}$ occurs in the set $E$ is the same for all
three travelers, i.e., the observer registering the exiting
travelers has no way of knowing how they arrived to the unit sphere
from $P$.

\item[(ii)] Theorem \ref{thm2.1} and, a more general Theorem \ref{thm3.1},  certainly suggest
connections to integral geometry. Indeed, the Malmheden algorithm
reminds of the inversion formula for Radon transform which
reconstructs functions from their integrals over hyperplanes, or
more generally, $k$-planes. According to this formula, the value of
a function at a point coincides, after applying  a certain power (a
half integer in even dimensions) of the Laplace operator to the
average of the Radon data through the point.

The Radon inversion formula delivers representation of functions as
continuous sums (i.e., integrals) of so-called plane waves, i.e., of
functions which are constant on families of parallel planes.
Similarly, Theorem \ref{thm3.1} can be interpreted in an analogous
manner with the role of plane waves played by "harmonic $k$-waves".
Here, by harmonic $k$-waves we understand functions which are
harmonic on parallel $k$- dimensional planes.

More precisely, fix the natural number $k, \ 1 \leq k \leq n.$
Denote by $\Delta_k$ the partial Laplace operator acting only on the
first $k$ variables:
$$\Delta_k=\sum\limits_{j=1}^{k}\frac{\partial^2}{\partial  x_j^2}.$$
For every rotation $\omega \in SO(n)$ we denote by
$\Delta_k^{\omega}$ the "`rotated"' partial Laplacian:
$$(\Delta_k^{\omega}g)(x):=\Delta_k(g\circ \omega^{-1})(\omega x).$$
It is not hard to show then that the Laplace operator coincides with
the average of the rotated partial Laplacians, i.e.,:
$$\Delta g(x)=\frac{n}{k}\int\limits_{SO(n)}\Delta_k^{\omega}g(x)d\omega,$$
where $d\omega$ denotes the normalized Haar measure on the
orthogonal group. Malmheden theorem establishes a similar link
between  the solutions of the corresponding Laplace equations:

\begin{theorem}\label{thm8.1}
Let $\Omega :=\{x : \left|x\right|<1\}$ be the unit ball and $f \in
C(\partial \Omega)$ and, for every $\omega \in SO(n)$, let
$u_{\omega}$ denote the (unique) solution of the boundary value
problem:
\begin{equation*}
\begin{cases}
\Delta_k^{\omega}u_{\omega}(x)=0, \, \, x \in \Omega,\\
u_{\omega}(x)=f(x), \, \, x \in \partial\Omega.
\end{cases}
\end{equation*}

Then the function
\begin{equation}\label{eq8.1}
u(x)=\int\limits_{SO(n)}u_{\omega}(x)d\omega
\end{equation}
solves the boundary value problem
\begin{equation*}
\begin{cases}
\Delta u(x)=0, \, \, x \in \Omega,\\
u(x)=f(x), \ \ x \in \partial\Omega.
\end{cases}
\end{equation*}
\end{theorem}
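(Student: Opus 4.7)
The plan is to reduce Theorem \ref{thm8.1} to Theorem \ref{thm3.1}. The key observation is that, for each fixed $\omega \in SO(n)$, the operator $\Delta_k^\omega$ differentiates only along the $k$-dimensional subspace $V_\omega := \omega(\mathbb{R}^k \times \{0\}) \subset \mathbb{R}^n$, so the equation $\Delta_k^\omega u_\omega = 0$ decouples across the foliation of $\mathbb{R}^n$ by affine $k$-planes parallel to $V_\omega$. The leaf through $x \in \Omega$ is $\alpha_\omega(x) := x + V_\omega \in G_x(k,n)$, and $\alpha_\omega(x) \cap \Omega$ is a $k$-dimensional open ball. The leaf-wise maximum principle together with the leaf-wise Poisson integral show that the unique continuous solution of the (degenerate) Dirichlet problem for $\Delta_k^\omega$ with data $f$ is given leaf by leaf by the $k$-dimensional harmonic extension. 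In the notation of \S\ref{sec3}, this reads
\begin{equation*}
u_\omega(x) = P^{\alpha_\omega(x)}\!\left(f_{\alpha_\omega(x)}\bigr|_{\Gamma\cap \alpha_\omega(x)}\right)(x).
\end{equation*}

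Next I would identify the pushforward of Haar measure. Fixing $x \in \Omega$ and translating $G_x(k,n)$ to the origin (which is $SO(n)$-equivariant and preserves $dm_k$), the map $\omega \mapsto \alpha_\omega(x)$ becomes the natural quotient $SO(n) \to SO(n)/S(O(k)\times O(n-k)) \cong G(k,n)$. Since $dm_k$ is the unique rotation-invariant probability measure on the Grassmannian, the pushforward of normalized Haar measure under this map is precisely $dm_k$. Combining this with the preceding step and invoking Theorem \ref{thm3.1},
\begin{equation*}
u(x) \;=\; \int_{SO(n)} u_\omega(x)\, d\omega \;=\; \int_{\alpha \in G_x(k,n)} P^\alpha\!\left(f_\alpha\bigr|_{\Gamma\cap\alpha}\right)\!(x)\, dm_k(\alpha),
\end{equation*}
which by Theorem \ref{thm3.1} is the harmonic extension of $f$ to $\Omega$. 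In particular $\Delta u = 0$ in $\Omega$, and the boundary condition $u|_{\partial\Omega}=f$ follows from the standard continuity of the Poisson integral on the ball.

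The main obstacle is the first step: making precise what it means to solve the (degenerate elliptic) Dirichlet problem $\Delta_k^\omega u_\omega = 0$ with continuous boundary data, and justifying the leaf-by-leaf description at the tangential leaves (those $\alpha_\omega(x)$ lying in the tangent hyperplane to $\partial\Omega$ at some boundary point). These exceptional leaves form a set of Haar measure zero in $SO(n)$, so delicate boundary behaviour there does not affect the integral; combined with the uniform bound $|u_\omega(x)| \le \|f\|_\infty$ and dominated convergence, this gives both the leaf-wise representation and the boundary values of the average $u$. The identity $\Delta g = (n/k)\int_{SO(n)} \Delta_k^\omega g\, d\omega$ mentioned just before the theorem is a pleasant motivation but is not directly used in this proof, since $\Delta_k^\omega u_\omega = 0$ does not pass through the averaging argument for $\Delta u_\omega$.
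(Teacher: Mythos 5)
Your proof is correct and follows essentially the same route as the paper's: identify $u_{\omega}$ leaf by leaf with the $k$-dimensional harmonic extension of $f$ on the planes parallel to $\omega \Pi$, observe that averaging over $SO(n)$ amounts to averaging over $G_x(k,n)$ with respect to $dm_k$, and invoke Theorem \ref{thm3.1}. Your extra care about the pushforward of Haar measure and the degenerate (tangential) leaves is a refinement of details the paper leaves implicit, not a different argument.
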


\begin{proof}
By construction,  the function $u_{\omega}\circ \omega^{-1},  \
\omega \in SO(n)$, is harmonic with respect to the first variables
$x_1, \cdots,x_k$, i.e., it is harmonic on every cross-section of
the ball by a $k$-dimensional plane parallel to the $k$-dimensional
plane $ \Pi =\{x_{k+1}=\cdots= x_n=0.\}$.
Then, $u_{\omega}$ is harmonic on each $k$-plane parallel to $\omega
\Pi.$ Let $x \in \Omega$ and let $\Pi_{\omega,x}$ be the
$k$-dimensional plane parallel to $\omega \Pi$ and passing through
$x$. Since $u_{\omega}=f$ on the unit sphere, $u_{\omega}$ is the
harmonic extension of $f$ into $\Pi_{\omega,x} \cap \Omega.$ When
$\omega$ runs over the whole orthogonal group $SO(n),\,
\Pi_{\omega,x}$ runs over all $k$-dimensional planes passing through
$x$ and Theorem 3.1 simply claims that the average, with respect to
all rotations $\omega$, of the values $u_{\omega}(x)$ equals to the
value at $x$ of the harmonic extension of the function $f$ into the
ball $\Omega$, i.e. $u(x).$ This proves (\ref{eq8.1}).
\end{proof}
The extended Malmheden's theorem for polyharmonic functions --cf.,
e.g., Theorem \ref{thm6.1} for biharmonic functions with $k=1$ --
allows a similar interpretation. In this case, the boundary
conditions will be corresponding complete sets of Cauchy data for
the polyharmonic equation $\Delta^N u=0$, i.e., the prescribed
values of functions and their first $N-1$ normal derivatives.

It would be interesting to investigate further whether a similar
decomposition perhaps holds for other differential operators with
constant coefficients and rotational symmetry, i.e.,  some operators
of the form $P(\Delta)$ where $P$ is a polynomial.

\item[(iii)]  There are various levels at which the converse to the Malmheden theorem can be formulated. Theorem \ref{thm4.1}
(arguably, the most natural one) assumes that Malmheden's procedure
produces the solution to the Dirichlet problem with arbitrary data
at all points of a convex domain $\Omega$. The conclusion is then
that $\Omega$ is a ball. Theorem \ref{thm7.2} is an attempt to
obtain the same conclusion under weaker assumptions: $\Omega$ is
assumed to be star-shaped and Malmheden's procedure is only assumed
to produce the desired results at one point. However, it required an
extra assumption of central symmetry. There are several other venues
of interest one may pursue here. To fix the ideas, let $\Omega$ be a
convex domain in $\mathbb{R}^n$, $P\in\Omega$, a fixed point.
$J=J_P$, as before, is the involution of the boundary
$\Gamma:=\partial\Omega$ determined by chords through $P$. As in
\S\ref{sec5}, $I=I_P(\Gamma)$ denotes the subspace of functions in
$C(\Gamma)$ invariant under $J$. We have noted earlier in
\S\ref{sec5}, that if $P$ is a Malmheden point of $\Omega$ (i.e.,
Malmheden's algorithm applied to every $f\in C(\Gamma)$ produces the
value $D_{P}f$ at $P$ of the solution to the Dirichlet problem for
the Laplacian with data $f$), then the following hold.
    \begin{enumerate}
    \item[(a)]  The two measures on $\Gamma$, $dA_{P}=$ the normalized subtended angle from $P$ and $w_P:=w(\cdot,\Gamma,P):=$ the harmonic measure
    on $\Gamma$ at $P$ produce the same results acting on all functions in $I_{P}(\Gamma)$. (Incidentally, this is equivalent to the relation between
    the solid angles of double cones with vertex at $P$ and the harmonic measure of the two surface portions they cut out of
     $\Gamma$ --- cf. \S\ref{sec5}, Corollary \ref{cor5.1}).

    \item[(b)]  For every function $f\in I_P(\Gamma)$ and every linear polynomial $\ell$, it follows that $D_P(\ell\cdot f)=\ell(P)\cdot D_P(f)$ ---
    cf. Corollary \ref{cor5.2}, Proposition \ref{prop5.1}.
    \end{enumerate}
    Proposition \ref{prop5.1} proves the converse, namely: if $\Omega,P$ are such that (a) and (b) hold, then $P$ is an $M$-point of $\Omega$.
    However, we do not know the exact relationship between (a) and (b).

\item[(iv)]  Property (a) of a domain $\Omega$ is on its own somewhat a mystery and holds, perhaps, the key to a deeper understanding of Malmheden's theorem.
Theorem \ref{thm7.2} yields that (a), together with the additional
hypothesis that $\Omega$ is  symmetric about point $P$, imply that
$\Omega$ is a ball centered at $P$. Without this assumption the
conclusion fails.
A similar problem in two dimensions with the subtended angle measure
replaced by a weighted arclength measure was settled in \cite[Thm.
3.29(i)]{EKS}, where it was shown that for those problems there are
other solutions besides circles.  We must exercise similar  caution
for our problem as well, since there are star-shaped domains
$\Omega$ (e.g., bicircular curves) in the plane for which the
subtended angular measure from a point $A\in\Omega$ equals to the
harmonic measure $w_B$ evaluated at another point $B\in\Omega$.
\end{enumerate}

The following example is due to the third author (H. S. Shapiro, 2006, unpublished).

\begin{proposition}\label{prop8.1}
There exists a planar domain $\Omega$,  star-shaped with respect to
the origin, and a point $P\ne 0$, $P\in\Omega$, so that for any
subarc $U\subset\Gamma$, $\Gamma:=\partial\Omega$, the angle that
$U$ subtends at the origin is equal to $2\pi w_P(U,\Omega,P)=2\pi
w_P(U)$. (As in previous sections, $w_P(U)$ denotes the harmonic
measure on $\Gamma$ evaluated at $P$.)
\end{proposition}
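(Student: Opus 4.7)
The plan is to construct $\Omega$ as the image of $\mathbb{D}$ under a conformal map $g\colon\mathbb{D}\to\Omega$ with $g(0)=P$, chosen so that $\arg g(e^{i\theta})=\theta$. Once this is done the two measures agree for a simple reason: conformal invariance sends $w_P$ to the normalized arclength $\frac{d\theta}{2\pi}$ on $\mathbb{T}$, while the condition on arguments forces the boundary $\Gamma=g(\mathbb{T})$ to have the polar representation $r=|g(e^{i\theta})|$, so $\Omega$ is star-shaped with respect to $0$ and the angle that any arc $U=g(V)\subset\Gamma$ subtends at the origin equals $\int_V d\theta=|V|$. Hence $2\pi w_P(U)=|V|=$ (angle at the origin), as required.

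To find $g$, I would look at $F(z):=g(z)/z$. The constraints on $g$ translate into: $F$ is meromorphic on $\overline{\mathbb{D}}$ with a single simple pole at $0$ of residue $P$, and real (in fact positive) on $\mathbb{T}$. Schwarz reflection through the unit circle then extends $F$ to a meromorphic function on the Riemann sphere whose only poles are simple ones at $0$ and $\infty$, hence $F(z)=c_{-1}/z+c_0+c_1z$; the reality of $F$ on $\mathbb{T}$ forces $c_0\in\mathbb{R}$ and $c_1=\overline{c_{-1}}$, while the residue condition gives $c_{-1}=P$. Therefore
\begin{equation*}
g(z)=P+bz+\bar Pz^{2},\qquad b\in\mathbb{R}.
\end{equation*}

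I would then fix any $P\ne 0$ and any real $b>2|P|$, and verify the three required properties directly. Positivity on the boundary: $F(e^{i\theta})=b+2\operatorname{Re}(\bar Pe^{i\theta})>0$. Conformality on $\overline{\mathbb{D}}$: $g'(z)=b+2\bar Pz$ has its unique zero at $-b/(2\bar P)$, of modulus $b/(2|P|)>1$; and $g(z_1)=g(z_2)$ with $z_1\ne z_2$ simplifies to $z_1+z_2=-b/\bar P$, of modulus $>2$, impossible in $\overline{\mathbb{D}}$. Star-shapedness and containment: $\Omega=g(\mathbb{D})$ is bounded by the polar curve $r=\rho(\theta):=b+2\operatorname{Re}(\bar Pe^{i\theta})>0$, so $\Omega$ is star-shaped with respect to $0$ and contains $P=g(0)\ne 0$.

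With $g$ in hand, the harmonic-measure/angle identity is immediate from the opening paragraph. The only conceptually nontrivial step is recognizing that the angular condition $\arg g(e^{i\theta})=\theta$ is equivalent to $g(z)/z$ being real on $\mathbb{T}$; once this is noticed, Schwarz reflection pins down $g$ to be a quadratic polynomial parameterized by one real constant $b$, and the construction is complete. I do not foresee any serious obstacle beyond the routine verifications above.
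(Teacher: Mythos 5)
Your construction is essentially the same as the paper's: the authors simply exhibit the quadratic $q(z)=az^{2}+z+a$ with $0<a<1/2$ (your $g$ with $P=a$, $b=1$) and use exactly the same two ingredients --- positivity of $q(z)/z$ on $\mathbb{T}$ to preserve arguments and conformal invariance of harmonic measure. Your derivation of the general form $g(z)=P+bz+\bar Pz^{2}$ via Schwarz reflection is a correct and slightly more systematic way to arrive at the same example, and all your verifications check out.
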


\begin{proof}
Consider the polynomial $q(z):=az^2+z+a$, $0<a<1/2$. $q$ is
univalent in the unit disk $\mathbb{D}$ and maps it conformally onto
a simply connected domain $\Omega$. For $z\in\mathbb{T}$,
$z=e^{i\theta}$, we have
$q(z)=z\left[a\left(z+\frac1z\right)+1\right]=\left(1+2a\cos\theta\right)z$.
Since $1+2a\cos\theta>0$ for all $\theta$, $q$ preserves arguments
of each $z\in\mathbb{T}$. Thus $\Omega$ is star-shaped and,
moreover, for any subarc $U\subset\Gamma$, the angle subtended by
$U$ at the origin $O$ is equal to the angle that its preimage
$U'=q^{-1}(U)$, $U\subset\mathbb{T}$ subtends at $O$. The latter, of
course, equals to the length $\left(U'\right)=2\pi w\left(U',
\mathbb{D},O\right)=2\pi w_O\left(U'\right)$, the harmonic measure
of $U'$ in $\mathbb{D}$ evaluated at the origin. By the conformal
invariance of harmonic measure, $2\pi w(U',\mathbb{D},O)=2\pi
w(U,\Omega,q(0))$ and setting $P=q(0)$ ($=a$) we are done.
\end{proof}

B. Gustafsson (a personal communication) has recently characterized
all simply connected domains $\Omega\subset\mathbb{C}$, where the
subtended angle measure from an interior point $A\in\Omega$ equals a
linear combination of the harmonic measures at finitely many other
(fixed) interior points. But this is the beginning of another tale.

\bibliographystyle{amsplain}
\bibliography{malmheden_thm}

\end{document}